\newcommand{\R}{\mathbb R}
\newcommand{\ol}{\mathcal  L}
\newcommand{\C}{\mathbb C}
\newcommand{\F}{\mathcal{F}}
\newcommand{\p}{\partial}
\newcommand{\intt}{\int_0^T}
\newcommand{\str}{L^3_TL^{\infty}_{xy}}
\newcommand{\kse}{L^{\infty}_xL^2_{yT}}
\newcommand{\mne}{L^2_xL^{\infty}_{yT}}
\newcommand{\tres}{|\!|\!|}
\newcommand{\sech}{\textnormal{\,sech}}
\newcommand{\seccion}[1]{\section{#1}\setcounter{equation}{0}}
\numberwithin{equation}{section}
\newtheorem{theorem}{Theorem}[section]
\newtheorem{proposition}[theorem]{Proposition}
\newtheorem{remark}[theorem]{Remark}
\newtheorem{lemma}[theorem]{Lemma}
\begin{document}
\title[Surface Electromigration Equation] {Existence of solutions for the surface electromigration equation}
\author[F. Linares]{Felipe Linares}
\author[A. Pastor]{Ademir Pastor}
\author[M. Scialom]{Marcia Scialom}
\address{IMPA, Estrada Dona Castorina 110,  CEP 22460-320, Rio de Janeiro, RJ, Brazil}
\email{linares@impa.br}
\address{IMECC-UNICAMP, Rua Sérgio Buarque de Holanda 651, CEP 13083-859,
Campinas, SP, Brazil}
\email{apastor@ime.unicamp.br}
\address{IMECC-UNICAMP, Rua Sérgio Buarque de Holanda 651, CEP 13083-859,
Campinas, SP, Brazil}
\email{scialom@ime.unicamp.br}

\begin{abstract}   We consider a model that describes electromigration in
nanoconductors known as surface electromigration (SEM) equation. Our
purpose here is to establish local well-posedness for the associated initial value
problem in Sobolev spaces from two different points of view. In the first one, we study the pure Cauchy problem
and  establish local well-posedness in $H^s(\R^2)$, $s>1/2$. In the second one, we study the Cauchy problem on the background 
of a Korteweg-de Vries solitary traveling wave in a less regular space. To obtain our results  we  make use of the smoothing properties of solutions for  the linear  problem corresponding to the Zakharov-Kuznetsov equation  for the latter problem. For the former problem we use bilinear estimates in Fourier restriction spaces introduced in \cite{mp}.
\end{abstract}

\subjclass[2010]{35B65, 35Q53.}

\keywords{Initial value problem, well-posedness,  Zakharov-Kuznetsov equation, Surface Electromigration equation}

\maketitle

\seccion{Introduction}

In    this    note   we   consider   the   initial   value   problem   (IVP) for the surface
electromigration (SEM) equation,
\begin{equation}\label{zkp}
\begin{cases}
u_t+\p_x \Delta u+\dfrac{1}{2}\left(uu_x+u_x\phi_x+u_y\phi_y\right)=0,\qquad (x,y)\in\R^2,\;t\in\R,\\
\Delta\phi=u_x.
\end{cases}
\end{equation}
Here    $u=u(x,y,t)$    and    $\phi=\phi(x,y,t)$    are    real-valued   functions, $\Delta$ represents the two-dimensional Laplacian operator and subscripts stand for partial derivatives. The above system was derived by Bradley \cite{br1}, \cite{br2} to describe
electromigration  in nanoconductors and  it couples   the
Zakharov-Kuznetsov (ZK)  equation 
\begin{equation}\label{zk}
v_t+ \p_x \Delta v+vv_x=0, \qquad (x,y)\in\R^2,\;t\in\R,
\end{equation}
with  a  potential  equation.

The function $u$ represents the surface
displacement  and  $\phi$  is  the  electrostatic  potential  on the surface
conductor.  The physical situation is as follows: when an unidirectional electrical current
passes  through  a  piece  of solid metal, collisions between the conduction
electrons  and  the metal atoms at the surface lead to drift of these atoms.
This  is  known  as  surface electromigration (SEM) and it may cause a solid
metal surface to move and deform producing undesirable surface instabilities. The equation for surface electromigration differs from ZK equation through the coupling to
the electrical potential $\phi$. 

The ZK equation \eqref{zk} describes the propagation of nonlinear
ion-acoustic  waves  in a magnetized plasma. It was formally derived by Zakharov and Kuznetsov in \cite{zk0}. Recently, a rigorous derivation of the ZK was given in \cite{lls} as a long wave limit of the Euler-Poisson system.  It may also be viewed as a
two-dimensional  version  of the Korteweg-de Vries (KdV) equation,
\begin{equation}\label{kdveq}
u_t+u_{xxx}+uu_x=0,
\end{equation}
which accounts weak lateral dispersion given by the term $v_{xyy}$.  The  phenomena  of
the moving free surface of a metal film in response  to  the electrical
current flowing through the bulk of the film is reminiscent of the way the
flow in the bulk of a fluid affects the motion of its  surface.  This
analogy, however, does  not match since the boundary conditions  are  very
different in  the two problems. Bradley \cite{br2}, Schimschak  et  al
\cite{sh1} and Gungor et al \cite{gn1} considered the propagation  of
solitons  over a free surface of a current-carrying metal film. Recently, M.
C. Jorge et al in \cite{mc} investigated the evolution of lump
solutions  for  the ZK and SEM equations. They derived
approximate  equations including the important effect of the radiation shed
by  the  lumps  as they evolve  and studied  the  evolution  of  the lump
disturbances asymptotically and numerically. Since SEM may cause electrical
failure  of a current carrying metal line it is interesting to have a better
understanding of  the properties  of  the  equation  because it will be an
important factor limiting   the   reliability  of  integrated  circuits.

\begin{remark}
From the mathematical point of view, \eqref{zkp} and \eqref{zk} may be viewed as a two-dimensional generalization of \eqref{kdveq}. Indeed, it is clear if $v$ does not depend on  the transverse variable $y$, then \eqref{zk} reduces immediately to \eqref{kdveq}. Also, if $u$ and $\phi$ do not depend on $y$ in \eqref{zkp} and $u$ and $\phi_x$ have a suitable decay to zero as $x\to\pm\infty$ then it follows from the second equation in \eqref{zkp} that $\phi_x=u$. Substituting this in the first equation, one sees that it also reduces to \eqref{kdveq}. 
\end{remark}

The notion of local well-posedness thorough this paper includes the properties of existence, uniqueness, persistence and continuous 
dependence upon the initial data.

\medskip

The IVP associated with the ZK equation \eqref{zk} has been widely studied in recent years. Indeed, the first result in this direction is due to Faminskii \cite{f}; he proved that \eqref{zk} is locally-in-time well-posed in the usual $L^2$-based Sobolev spaces $H^s(\R^2)$, $s\geq1$. In \cite{LP}, Linares and Pastor established that the IVP is locally well-posed for $s>3/4$. The Sobolev index was pushed down independently by Grunrock and Herr (\cite{gh}) and Molinet and Pilot (\cite{mp}), where the authors showed the local well-posedness for $s>1/2$. Recently, 
Kinoshita in \cite{kinoshita} established a sharp local well-posedness for data in $H^s(\R^2)$, $s>-1/4$.  We shall also mention that the
study of IVP associated to generalizations of the ZK equation has gained  a lot of attention recently.  We refer the reader to \cite{bl},\cite{flp}, \cite{Gru1}, \cite{Gru2}, \cite{HK}, \cite{LP1}, \cite{LSaut}, \cite{Kinoshita2}, \cite{rv}, \cite{RV2} and references therein.

\medskip

In  this work we focus on the study of the IVP associated with \eqref{zkp} from two different points of view. In the first one, we study the pure IVP; thus we couple \eqref{zkp} with the initial condition 
\begin{equation*}
u(x,y,0)=u_0(x,y)
\end{equation*}
 and study the IVP  in the Sobolev spaces  $H^s(\R^2)$.   Our goal will be
to  establish  a  local well-posedness theory in $H^s(\R^2)$,
$s>  1/2$.  Since   \eqref{zkp}  couples  a  ZK  equation with a
potential $\phi$ we will exploit the properties of solutions of ZK
equation to reach our purpose. 

The second point of view is closed related with the study of transverse instability of one-dimensional solitons. To put the set up forward, note that \eqref{zkp} has solitary-wave solutions given by
\begin{equation}\label{KdV-soliton}
u(x,y,t)=\varphi(x-\omega t)=3\;\omega \sech^2\left(\frac{\sqrt \omega}{2}(x-\omega t)\right), \quad \omega>0,
\end{equation}
\begin{equation}\label{KdV-soliton1}
\phi(x,y,t)=\psi(x-\omega t)=6\;\sqrt{\omega} \tanh\left(\frac{\sqrt \omega}{2}(x-\omega t)\right), \quad \omega>0.
\end{equation}

\begin{remark}\label{rema1}
Note that $\varphi$ is exactly the solitary-wave solution  of the KdV equation  \eqref{kdveq} and $\psi_x=\varphi$.
\end{remark}

To investigate the transverse instability of the traveling waves \eqref{KdV-soliton} and \eqref{KdV-soliton1} under localized perturbations, one needs to study the evolution of $v:=u-\varphi$ and $w:=\phi-\psi$. Substituting this transformations into \eqref{zkp}, we obtain the system
\begin{equation}\label{sem1}
\begin{cases}
v_t+\partial_x \Delta v+\dfrac{1}{2}(vv_x+v_xw_x+v_yw_y)+\dfrac{1}{2}(\varphi_xv+\varphi v_x)+\dfrac{1}{2}(\psi_xv_x+\varphi_xw_x)  =  0,  \\
\Delta w=v_x.
\end{cases}
\end{equation}
Thus, we are lead to study \eqref{sem1} coupled with an initial condition $v(x,y,0)=v_0(x,y)$ in $H^s(\R^2)$. This step is then necessary to understand the dynamics of the SEM equation on the background of a nonlocalized solitary traveling wave. 
 It should be noted that in \cite{br1} the author established that the traveling waves \eqref{KdV-soliton} and \eqref{KdV-soliton1} are unstable under sinusoidal perturbation with wave-vector perpendicular to the direction of propagation.

Next  we describe the strategy to solve our problems. First we define
the  operator $\ol:=(\Delta)^{-1}\p_x$ and rewrite systems \eqref{zkp} and \eqref{sem1}
as  a  single  equation. Thus we will consider the following equivalent problems:
\begin{equation}\label{uzkp}
\begin{cases}
u_t+\p_x \Delta u+\dfrac{1}{2}\Big(uu_x+ u_x\p_x\ol(u)+u_y\p_y\ol(u)\Big)=0,\\
u(x,y,0)=u_0(x,y).
\end{cases}
\end{equation}
and
\begin{equation}\label{semIVP}
\begin{cases}
u_t+\partial_x \Delta u+\dfrac{1}{2}\Big(uu_x+u_x\partial_x\mathcal{L}(u)+u_y\partial_y\mathcal{L}(u)\Big)+\dfrac{1}{2}(\varphi_xu+2\varphi u_x)+\dfrac{1}{2}\varphi_x\partial_x\mathcal{L}(u)  =  0, \\
u(x,y,0)=u_0(x,y).
\end{cases}
\end{equation}
In order to  solve  the  IVP  \eqref{uzkp} (a similar approach will take place for \eqref{semIVP})  we will use its integral equivalent equation,                                                               i.e.,
\begin{equation}\label{ie}
\begin{split}
u(t)=U(t)u_0-\dfrac{1}{2}\int_0^t
U(t-t')\Big(u\p_xu+u_x\p_x\ol(u)+u_y\p_y\ol(u)\Big)(t')\,dt',
\end{split}
\end{equation}
where  $U(t)=\exp(t(\p_x^3+\p_x\p_y^2))$  is the unitary group associated to
the linear  problem
\begin{equation}\label{lzk}
\begin{cases}
u_t+\p_x \Delta u=0,\qquad (x,y)\in\R^2,\;t\in\R,\\
u(x,y,0)=u_0(x,y).
\end{cases}
\end{equation}
In particular, it is not difficult so see that $\widehat{{U(t)u_0}}(\xi,\mu)=e^{it(\xi^3+\xi\mu^2)}{\widehat{u}}_0(\xi,\mu)$, where the hat represents the Fourier transform.

\medskip

 Our  local   well-posedness   result for \eqref{uzkp} reads  as   follows (see Notation below).

\begin{theorem}\label{lwp}
Let $u_0\in H^s(\R^2)$, $s>1/2$. There exist $T=T(\|u_0\|_{H^s})$ and a unique solution
$u$ of  the IVP  \eqref{uzkp} satisfying
\begin{equation*}
u\in   C([0,T]:H^s(\R^2))\cap X^{s,\frac{1}{2}+}_T.
\end{equation*}
Moreover,  for  any  $T'\in(0,T)$,  the data-to-solution map,  $u_0\mapsto  u(t)$, defined from a
neighborhood of $u_0\in H^s(\R^2)$ into the class $C([0,T']:H^s(\R^2))\cap X^{s,\frac{1}{2}+}_{T'}$ is
smooth.
\end{theorem}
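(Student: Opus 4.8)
The plan is to solve the integral equation \eqref{ie} by a contraction argument in a Bourgain space adapted to the linear group $U(t)=\exp(t(\p_x^3+\p_x\p_y^2))$. For $s,b\in\R$, define $X^{s,b}$ by
\[
\|u\|_{X^{s,b}}=\left\|\langle(\xi,\mu)\rangle^{s}\,\langle\tau-(\xi^3+\xi\mu^2)\rangle^{b}\,\widehat{u}(\xi,\mu,\tau)\right\|_{L^2_{\xi\mu\tau}},
\]
and let $X^{s,b}_T$ be its restriction to $[0,T]$. First I would record the equation-independent linear estimates, all standard for $b>1/2$: the homogeneous bound $\|U(t)u_0\|_{X^{s,b}_T}\lesssim\|u_0\|_{H^s}$, the Duhamel bound $\big\|\int_0^tU(t-t')F(t')\,dt'\big\|_{X^{s,b}_T}\lesssim\|F\|_{X^{s,b-1}_T}$, the embedding $X^{s,b}_T\hookrightarrow C([0,T]:H^s(\R^2))$, and a gain of a positive power $T^{\theta}$ obtained by slightly lowering the modulation exponent. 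These reduce the theorem to a single quadratic estimate for the nonlinearity.

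The only equation-dependent ingredient is a bilinear estimate for
\[
N(u)=\tfrac12\big(uu_x+u_x\p_x\ol(u)+u_y\p_y\ol(u)\big).
\]
The structural key is that, since $\Delta\ol=\p_x$, the three terms assemble into a single divergence:
\[
N(u)=\tfrac12\,\nabla\cdot\big(u\,\nabla\ol(u)\big)=\tfrac12\,\p_x\big(u\,\p_x\ol(u)\big)+\tfrac12\,\p_y\big(u\,\p_y\ol(u)\big),
\]
so $N$ has the "derivative of a product" form of the ZK nonlinearity $\tfrac12\p_x(u^2)$. Moreover $\p_x\ol$ and $\p_y\ol$ are Fourier multipliers of order zero, with bounded symbols $\xi^2/(\xi^2+\mu^2)$ and $\xi\mu/(\xi^2+\mu^2)$; hence $\p_x\ol(u)$ and $\p_y\ol(u)$ are as regular as $u$ in every $X^{s,b}$. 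Absorbing these multipliers into one factor, the whole problem reduces to
\[
\|\p_x(u\,v)\|_{X^{s,b-1}}+\|\p_y(u\,v)\|_{X^{s,b-1}}\lesssim\|u\|_{X^{s,b}}\|v\|_{X^{s,b}},\qquad s>\tfrac12,\ b=\tfrac12+.
\]

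The estimate for $\p_x(uv)$ is exactly the bilinear estimate of \cite{mp} for the ZK equation, valid for $s>1/2$, which covers the $uu_x$ term. The genuinely new ingredient — and the step I expect to be the crux — is the transverse estimate for $\p_y(uv)$, where the $x$-smoothing exploited for $\p_x(uv)$ is unavailable. The mechanism is nonetheless the same: writing $\sigma=\tau-(\xi^3+\xi\mu^2)$ and $\sigma_j=\tau_j-(\xi_j^3+\xi_j\mu_j^2)$, the identity $\sigma_1+\sigma_2-\sigma=\Omega$ with $\Omega=3\xi\xi_1\xi_2+2\xi\mu_1\mu_2+\xi_1\mu_2^2+\xi_2\mu_1^2$ forces $\max(\langle\sigma\rangle,\langle\sigma_1\rangle,\langle\sigma_2\rangle)\gtrsim|\Omega|$, and one must verify that $|\Omega|$ compensates the $y$-derivative $|\mu|$ across all dyadic interactions. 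The delicate region is the one in which two high frequencies combine into a low output frequency: there $|\xi|$ is small, so the cubic part of $\Omega$ is weak and it is the transverse part $\xi_1\mu_2^2+\xi_2\mu_1^2$ that must be shown not to degenerate. Granting this, the dyadic and Cauchy–Schwarz scheme of \cite{mp} should deliver the $\p_y$ bound in the same range $s>1/2$.

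With these estimates in hand, the map $\Phi(u)=U(t)u_0-\tfrac12\int_0^tU(t-t')\,\nabla\cdot\big(u\,\nabla\ol(u)\big)(t')\,dt'$ obeys $\|\Phi(u)\|_{X^{s,b}_T}\lesssim\|u_0\|_{H^s}+T^{\theta}\|u\|_{X^{s,b}_T}^2$ together with a matching Lipschitz bound on differences, so for $T=T(\|u_0\|_{H^s})$ small it is a contraction on a ball of $X^{s,b}_T$; its fixed point is the unique solution in $C([0,T]:H^s(\R^2))\cap X^{s,b}_T$. Finally, since $\Phi$ is linear in $u_0$ and polynomial (quadratic) in $u$ through a bounded bilinear operator, the fixed point depends analytically on $u_0$, which yields the asserted smoothness of the data-to-solution map on each $[0,T']$ — the usual by-product of solving by contraction in Bourgain spaces.
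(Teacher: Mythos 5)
Your overall strategy --- contraction in $X^{s,\frac12+}_T$ via the linear estimates of Lemma~\ref{linearbou} together with a Molinet--Pilod type bilinear estimate --- is the same as the paper's, and your divergence identity $uu_x+u_x\p_x\ol(u)+u_y\p_y\ol(u)=\nabla\cdot\big(u\,\nabla\ol(u)\big)$ (which indeed follows from $\Delta\ol=\p_x$) is a correct and genuinely different reduction: it puts the derivative on the output frequency, so the required symbols are $|\xi|$ and $|\mu|$. The paper instead keeps the nonlinearity as written and proves, in Proposition~\ref{bilinear}, the estimates $\|u_xv\|_{X^{s,-\frac12+2\delta}}\lesssim\|u\|_{X^{s,\frac12+\delta}}\|v\|_{X^{s,\frac12+\delta}}$ and its $u_y$ analogue, i.e.\ the symbol is $|\xi_1|$ (resp.\ $|\mu_1|$) in place of the $|\xi|$ of \cite[Prop.~4.1]{mp}, with $v=\p_x\ol(u)$ or $\p_y\ol(u)$ absorbed by the zero-order multipliers exactly as you do. The one place where this choice costs something is the $High\times High\rightarrow Low$ interaction, where $N_1\gg N$ makes $|\xi_1|$ strictly worse than $|\xi|$; the paper handles it by interpolating between the two bilinear Strichartz estimates \eqref{NN} and \eqref{NN1}. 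Your divergence form avoids that particular difficulty but creates a different one, namely the transverse term $\p_y(uv)$.

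And that is where your argument has a genuine gap: the estimate $\|\p_y(uv)\|_{X^{s,-\frac12+2\delta}}\lesssim\|u\|_{X^{s,\frac12+\delta}}\|v\|_{X^{s,\frac12+\delta}}$, which you yourself identify as the crux, is only asserted (``should deliver''), with the delicate interactions explicitly left open. A complete proof must actually run the dyadic scheme and verify that the argument of \cite{mp} for $\p_x(uv)$ never uses the factor $|\xi|$ beyond the bound $|\xi|\lesssim N$ --- so that it survives the replacement $|\xi|\mapsto|\mu|\lesssim N$ --- and in particular that the $High\times High\rightarrow High$ case, where the weight is $N^{1-s}$ and one must gain from $L_{\max}\gtrsim|\Omega|$ even when the frequencies concentrate near the $\mu$-axis and the resonance function can degenerate, still closes. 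This is plausible (the paper's own observation that $|\xi_1|$ may be replaced by $N$ in that regime points the same way), but as written you have deferred precisely the step that needs proving. The remaining ingredients --- boundedness of the multipliers $m_1,m_2$ on $X^{s,b}$, the time-gain, the contraction, and analytic dependence on the data --- are standard and agree with the paper.
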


The  idea to prove Theorem \ref{lwp} is to use the dispersive properties of the equation and the contraction mapping principle to obtain
a  fixed point associated to the integral equation \eqref{ie}. The main tool
to  succeed  in  our  purpose  is a bilinear estimate in Bourgain spaces  introduced in \cite{mp}.

\medskip

\begin{remark}  As far as we know the only conserved quantities satisfied by
solutions   of \eqref{zkp} are
\begin{equation*}
I_1(u)=\int_{\R^2} u(x,y,t)\,dxdy\quad and \quad I_2(u)=\int_{\R^2} u^2(x,y,t)\,dxdy.
\end{equation*}
Thus  the  lack  of  a  conservation law in high-order Sobolev spaces does not allow us to
extend  the  solutions  obtained  in  Theorem  \ref{lwp}  globally  in time. Since the flow associated to the SEM equation is conserved in $L^2(\R^2)$ one expects to establish a local theory in this space. Moreover, a scaling argument suggests to obtain local well-posedness 
for  $s>-1$. We do not know whether the method developed by Kinoshita in \cite{kinoshita} may apply to lower the regularity
in our case.  As we mentioned before his result shows sharp local well-posedness for the IVP associated to the ZK equation  for 
initial data in $H^s(\R^2)$ for $s>-1/4$.  To obtain a $L^2$ local theory for the IVP \eqref{uzkp} is indeed a challenging open problem.
\end{remark}

Regarding the IVP \eqref{semIVP} our main theorem reads as follows.

\begin{theorem}\label{lwp1}
Let $\varphi(x-ct)$ be the soliton of the KdV equation in \eqref{KdV-soliton}. Let $u_0\in H^1(\R^2)$. There exist $T=T(\|u_0\|_{H^1})$ and a unique solution
$u$ of  the IVP  \eqref{semIVP} satisfying
\begin{equation}\label{p1p1}
u\in   C([0,T]:H^1(\R^2)),
\end{equation}
\begin{equation}
\|\p^2   u\|_{\kse}<\infty,
\end{equation}
\begin{equation}
\|\nabla  u\|_{\str}<\infty,
\end{equation}
and
\begin{equation}\label{p4p4}
\|u\|_{\mne}<\infty.
\end{equation}
Moreover,  for  any  $T'\in(0,T)$  the  map  $u_0\mapsto  u(t)$ defined in a
neighborhood of $u_0\in H^1(\R^2)$ into the class defined by \eqref{p1p1}-\eqref{p4p4} is
smooth.
\end{theorem}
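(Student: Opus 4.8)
The plan is to solve the Duhamel (integral) formulation of \eqref{semIVP},
\begin{equation*}
u(t)=U(t)u_0-\frac{1}{2}\int_0^t U(t-t')\Big(uu_x+u_x\p_x\ol(u)+u_y\p_y\ol(u)+\varphi_x u+2\varphi u_x+\varphi_x\p_x\ol(u)\Big)(t')\,dt',
\end{equation*}
by the contraction mapping principle on a ball of the Banach space $X_T$ whose norm is
\begin{equation*}
\tres u\tres_T:=\|u\|_{C([0,T]:H^1)}+\|\p^2 u\|_{\kse}+\|\nabla u\|_{\str}+\|u\|_{\mne}.
\end{equation*}
The four constituents of this norm are dictated precisely by the conclusions \eqref{p1p1}--\eqref{p4p4} and by the linear smoothing theory for the ZK group $U(t)$ used in \cite{LP}. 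Throughout, the key structural observation is that $\ol=\Delta^{-1}\p_x$ gives $\p_x\ol=\p_x^2\Delta^{-1}$ and $\p_y\ol=\p_x\p_y\Delta^{-1}$, which are zeroth-order Fourier multipliers; hence $\p_x\ol(u)$ and $\p_y\ol(u)$ carry the same regularity as $u$, and the quadratic terms behave, for the purposes of the derivative count, exactly like the ZK nonlinearity $uu_x$.

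First I would collect the linear estimates for $U(t)$ in each of the four norms together with their inhomogeneous counterparts: the Kato local smoothing effect, the Strichartz estimate controlling $\|\nabla(\cdot)\|_{\str}$, and the maximal function estimate controlling $\|\cdot\|_{\mne}$ at regularity $s>3/4$. Since we work in $H^1$, all of these are available with room to spare. In particular, the gain of two derivatives in the inhomogeneous smoothing estimate, of the schematic form $\|\p^2\int_0^t U(t-t')F\,dt'\|_{\kse}\lesssim\|F\|_{L^1_x L^2_{yT}}$, is what permits the derivative loss in the nonlinearity to be absorbed while recovering $\p^2 u$ in the smoothing norm. This step is essentially a transcription of the $H^s$, $s>3/4$, analysis of \cite{LP} for the pure ZK equation, so no genuinely new linear estimate is required.

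Next I would estimate the nonlinear and soliton-dependent terms. The purely nonlinear terms $uu_x$, $u_x\p_x\ol(u)$ and $u_y\p_y\ol(u)$ are treated as in the ZK case: each product is controlled by distributing its factors among the four norms of $\tres\cdot\tres_T$ via Hölder's inequality, the single derivative being absorbed by the smoothing gain, and a positive power $T^{\theta}$ being extracted from the time integration to produce smallness. The soliton terms are genuinely lower order: since $\varphi,\varphi_x\in\s(\R)$ are smooth and rapidly decaying, a term such as $\varphi u_x$, after one takes the extra $x$-derivative demanded by the energy norm, is dominated by Hölder in $x$ using $\varphi\in L^2_x$ together with the controlled quantity $\|\p^2 u\|_{\kse}$, again with a favorable power of $T$; the remaining soliton terms $\varphi_x u$ and $\varphi_x\p_x\ol(u)$ are even milder, carrying no derivative of $u$.

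The main obstacle, and the point requiring real care, is the simultaneous closing of the derivative count and the verification that the zeroth-order operators $\p_x\ol$ and $\p_y\ol$ respect the anisotropic mixed-norm spaces $\kse$ and $\mne$ of the solution class; these two-dimensional singular multipliers need not be bounded on such mixed $L^p$-norms, so in each product I would place the nonlocal factor $\p_x\ol(u)$ or $\p_y\ol(u)$ in a norm where its boundedness is transparent—an isotropic Sobolev norm, followed if necessary by Sobolev embedding—while keeping the differentiated factor in the smoothing space $\kse$. Once every multilinear estimate is established with a gain $T^{\theta}$, $\theta>0$, the map defined by the right-hand side above is a contraction on a small ball of $X_T$ for $T=T(\|u_0\|_{H^1})$ sufficiently small; its unique fixed point is the desired solution, and uniqueness, persistence, and the smoothness of the data-to-solution map follow from the standard contraction arguments applied, via the same estimates, to differences of solutions.
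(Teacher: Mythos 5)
Your overall architecture is the same as the paper's: Duhamel formulation, contraction in a mixed-norm space built from the energy, Kato smoothing, Strichartz and maximal-function norms, quadratic terms split by H\"older between $\kse$ and its dual pairing, and the soliton terms handled through $\varphi\in L^2_xL^\infty_T$ and $\varphi,\varphi_x\in L^\infty_{xT}$. You also correctly identify the one real obstruction: the multipliers $\xi^2/(\xi^2+\mu^2)$ and $\xi\mu/(\xi^2+\mu^2)$ are not known to be bounded on the anisotropic spaces $\kse$ and $\mne$. But your proposed fix --- placing $\p_x\ol(u)$ or $\p_y\ol(u)$ in an isotropic Sobolev norm and using Sobolev embedding --- does not close. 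Look at the worst term, $\p^2u\cdot\p_x\ol(u)$, which arises when you differentiate $u_x\p_x\ol(u)$ to recover $\p^2 E_2(u)$ in $\kse$ or to run the $H^1$ energy estimate. The factor $\p^2u$ is controlled \emph{only} in $\kse=L^\infty_xL^2_{yT}$ (it is not in $L^2_{xy}$ for fixed $t$, since $u$ is merely $H^1$), so H\"older forces the other factor into $L^2_xL^\infty_{yT}=\mne$, up to a harmless $T^{1/2}$. There is no isotropic substitute: $H^1(\R^2)$ does not embed into $L^\infty$, and even $\|\p_x\ol(u)\|_{L^\infty_TL^2_xL^\infty_y}$ (which one could hope to reach via $H^{1/2+}_y\hookrightarrow L^\infty_y$) does not control $\|\p_x\ol(u)\|_{\mne}$, because the supremum in $t$ cannot be pulled inside the $L^2_x$ norm --- that interchange is exactly what maximal-function estimates exist to quantify, and it fails in general.

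The paper's resolution is to enlarge the resolution space: the contraction is run with the norm
\begin{equation*}
\tres v\tres=\|v\|_{L^\infty_TH^1}+\|\p^2v\|_{\kse}+\|\nabla v\|_{\str}+\|v\|_{\mne}+\|\p_x\ol(v)\|_{\mne}+\|\p_y\ol(v)\|_{\mne},
\end{equation*}
and the two extra components are propagated through the Duhamel terms in Lemma \ref{l3} (and the analogues in Lemmas \ref{lemmaE4} and \ref{lemmaE5}) by observing that $\p_x\ol$ and $\p_y\ol$ commute with $U(t)$, so that $\|\p_x\ol(E_j(u))\|_{\mne}$ is bounded via the maximal function estimate \eqref{s3} by $c(1,T)\int_0^T\|\cdot\|_{H^1}\,dt'$ applied to the nonlinearity, where the $H^1$-boundedness of $\p_x\ol$ \emph{is} available. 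Without these two extra norm components your iteration map is not bounded on your space $X_T$, so the contraction does not close as written. (A secondary point: the paper does not use a two-derivative-gain inhomogeneous smoothing estimate of the form you sketch; it uses Minkowski's inequality plus the one-derivative homogeneous smoothing \eqref{s2}, letting one derivative fall on the nonlinearity --- which is precisely why the term $\p^2u\cdot\p_x\ol(u)$ appears and must be handled as above.)
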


\begin{remark}
It will be clear from the proof of Theorem \ref{lwp1} below that we do not use the particular explicit form of the traveling waves in \eqref{KdV-soliton} and \eqref{KdV-soliton1}. Instead, we only use the properties that $\varphi(x-\omega t)$ belongs to $L^2_xL^\infty_T$, and  $\varphi(x-\omega t)$ and $\varphi_x(x-\omega t)$ are uniformly bounded in $x,t$. In particular Theorem \ref{lwp1} holds if $\varphi$ is any traveling-wave solution of the KdV equation with these two properties. For instance, $\varphi$ may be any $N$-soliton of the KdV equation.
\end{remark}

\begin{remark}
 We point out that any $N$-soliton of the KdV equation is also a solution of both the ZK equation \eqref{zk} and the Kadomtsev-Petviashvili (KP) equation,
$$
(u_t+u_{xxx}+uu_x)_x\pm u_{yy}=0.
$$ 
Similar results as the one in Theorem \ref{lwp1} for these two equations have appeared in \cite{LPS}, \cite{MST} and \cite{MST1}. The local results were also extended to global ones due to the conservation of the energy at the $H^1(\R^2)$ level.
\end{remark}

\begin{remark} Using the symmetrization introduced by Gr\"unrock and Herr in \cite{gh} it may be possible to establish local well-posedness
in $H^s(\R^2)$, $s>3/4$, (see for instance \cite{LPSi}). We choose to follow the approach presented here for two reasons. First, the aforementioned result  is not sharp. Secondly, after applying the change of variables in \cite{gh} the nonlinear terms in the new equation increases and the estimating process turns out
to be tedious.
\end{remark}

\medskip

This paper is organized as follows. Below we introduce the basic notation and give the linear estimates we need. Section \ref{seclwp} is devoted to prove Theorem \ref{lwp}. The proof relies on a fixed point argument in the Bourgain spaces. We focus on proving the main bilinear estimate necessary to establish Theorem \ref{lwp}. In the last section, Section \ref{seclwp1}, we give the proof of  Theorem  \ref{lwp1}. The tools are based on the Strichartz estimate, Kato's smoothing effect, and the maximal function presented in Lemma \ref{lines}.\\

\section{Notation and Linear Estimates}
Here we will introduce the main notation used throughout the paper and give some preliminary linear estimates. Besides the standard notation in partial differential equations, we use $c$ to denote various constants that may vary line by line. If $A$ and $B$ are two positive constants, the notation $A\lesssim B$ means that there is $c>0$ such that $A\leq cB$. Also, $A\wedge B:=\min\{A,B\}$ and $A\vee B:=\max\{A,B\}$.
 Given any $r\in\R$ we write $r+$  for $r+\varepsilon$, where $\varepsilon>0$ is a sufficiently small number. The mixed space-time norm is defined as (if $1\leq p,q,r<\infty$)
$$
\|f\|_{L^p_xL^q_yL^r_T}= \left( \int_\R \left( \int_\R\left(\int_0^T|f(x,y,t)|^rdt\right)^{q/r}dy \right)^{p/q}dx \right)^{1/p}
$$
with standard modifications if either $p=\infty$, $q=\infty$, or $r=\infty$. Similar spaces appear if one interchanges the order of integration. If two indices are equal we put them into the same space; for instance, if $q=r$ we denote the norm $\|\cdot\|_{L^p_xL^q_yL^q_T}$ by $\|\cdot\|_{L^p_xL^q_{yT}}$. For short, we set
$$
\|\p^2 u\|_{\kse}:=\|\p^2_x u\|_{\kse}+\|\p^2_{xy} u\|_{\kse}+\|\p^2_yu\|_{\kse}
$$
and
$$
\|\nabla     u\|_{\str}:=\|\p_x    u\|_{\str}+\|\p_yu\|_{\str}.
$$

The space-time Fourier transform of $u=u(x,y,t)$ will be denoted by $\F u=\F u(\xi,\mu,\tau)$ or $\widehat{u}(\xi,\mu,\tau)$, whereas the Fourier transform in space will be denoted by $\F_{xy}u$. As usual, $\F^{-1}$ and $\F_{xy}^{-1}$ represent the respective inverse Fourier transforms.

Let $\eta\in C_0^\infty(\R)$ be such that $0\leq \eta\leq1$, $\eta\equiv1$ on the interval $[-5/4,5/4]$ and $\textrm{supp}(\eta)\subset[-8/5,8/5]$. Now we set $\zeta(\xi)=\eta(\xi)-\eta(2\xi)$ and for $k\in \mathbb{N}^*=\{1,2,\ldots\}$ we define
$$
\zeta_{2^k}(\xi,\mu):=\zeta\left(2^{-k}|(\xi,\eta)|\right)\quad \mbox{and} \quad \rho_{2^k}(\xi,\mu,\tau):=\zeta\left(2^{-k}\left(\tau-(\xi^3+\xi\mu^2)\right)\right).
$$
For convenience we set $\zeta_1(\xi,\mu)=\eta(|(\xi,\mu)|)$ and $\rho_1(\xi,\mu,\tau)=\eta(\tau-(\xi^3+\xi\eta^2))$. It is easy to see that
$$
\sum_N\zeta_N(\xi,\mu)=1.
$$
Here and throughout the paper, any summation over the variables $N,L,K,M$ are suppose to be dyadic with $N,K,L,M\geq1$. The Littlewood-Paley multipliers on frequencies and modulations are defined as
$$
P_Nu:=\F_{xy}^{-1}\left(\zeta_N\F_{xy}(u)\right) \quad \mbox{and}\quad Q_Lu:=\F^{-1}\left(\rho_L\F(u)\right).
$$

Given $s,b\in\R$,  the Bourgain spaces $X^{s,b}$ are defined as the completion of the Schwartz space $\mathcal{S}(\R^3)$ under the norm
$$
\|u\|_{X^{s,b}}:=\left( \int_{\R^3}\langle\tau-(\xi^3+\xi\mu^2)\rangle^{2b} \langle|(\xi,\mu)|\rangle^{2s}|\widehat{u}(\xi,\mu,\tau)|^2d\xi d\mu d\tau \right)^{\frac{1}{2}},
$$
where $\langle x \rangle=1+|x|$. For $T>0$, if $u$ is defined on $\R^2\times [0,T]$ we set
$$
\|u\|_{X^{s,b}_T}:=\inf\{\|\widetilde{u}\|_{X^{s,b}}; \; \widetilde{u}:\R^2\times\R\to\C, \ \widetilde{u}|_{\R^2\times[0,T]}=u    \}.
$$

Next we state useful linear estimates to prove Theorems \ref{lwp} and \ref{lwp1}. The first result concerns linear estimates in Bourgain spaces.

\begin{lemma}\label{linearbou}
Let $s\in\R$ and $b>1/2$. Then,
\begin{equation*}
	\|\eta(t)U(t)f\|_{X^{s,b}}\lesssim \|f\|_{H^s} \qquad   (\text{Homogeneous Estimate})
	\end{equation*}
	and
	\begin{equation*}
		\left\|\eta(t)\int_0^tU(t-t')g(t')dt'\right\|_{X^{s,\frac{1}{2}+\delta}}\lesssim \|g\|_{X^{s,-\frac{1}{2}+\delta}}, \qquad
		(\text{Non-homogeneous Estimate})
	\end{equation*}
where $0<\delta<1/2$. In addition, given $T>0$ and $-1/2<b'\leq b<1/2$ we have
\begin{equation*}
\|u\|_{X^{s,b'}_T}\lesssim T^{b-b'}\|u\|_{X^{s,b}_T}.
\end{equation*}
\end{lemma}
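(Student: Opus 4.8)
The plan is to reduce all three estimates to one-dimensional statements in the time variable. Writing $p(\xi,\mu):=\xi^3+\xi\mu^2$ for the dispersion symbol, the change of variables $\tau\mapsto\tau+p(\xi,\mu)$ in the integral defining the norm shows that
$$
\|u\|_{X^{s,b}}=\|U(-t)u\|_{H^b_tH^s_{xy}},
$$
since $\F[U(-t)u](\xi,\mu,\tau)=\widehat u(\xi,\mu,\tau+p(\xi,\mu))$. The structural fact I will use throughout is that $U(-t)$ acts only on the spatial variables, hence commutes with multiplication by any scalar function of $t$. Since every operator appearing below also commutes with $\langle|(\xi,\mu)|\rangle^s$, it suffices to argue at $s=0$ and restore the spatial weight at the end. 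With this reduction the homogeneous estimate is immediate: $U(-t)[\eta(t)U(t)f]=\eta(t)f$, so
$$
\|\eta(t)U(t)f\|_{X^{s,b}}=\|\eta(t)f\|_{H^b_tH^s_{xy}}=\|\eta\|_{H^b_t}\,\|f\|_{H^s}\lesssim\|f\|_{H^s},
$$
the last step holding because $\eta\in C_0^\infty(\R)\subset H^b_t$ for every $b$.

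I expect the non-homogeneous estimate to be the main obstacle. Setting $G(t):=U(-t)g(t)$, so that $\|g\|_{X^{0,-1/2+\delta}}=\|G\|_{H^{-1/2+\delta}_tL^2_{xy}}$ and $U(-t)\eta(t)\int_0^tU(t-t')g(t')\,dt'=\eta(t)\int_0^tG(t')\,dt'$, the reduction turns the claim into the purely temporal bound
$$
\Big\|\eta(t)\int_0^tG(t')\,dt'\Big\|_{H^{1/2+\delta}_tL^2_{xy}}\lesssim\|G\|_{H^{-1/2+\delta}_tL^2_{xy}},
$$
which I would prove fiberwise in $(\xi,\mu)$ as a one-dimensional inequality $\|\eta(t)\int_0^th(s)\,ds\|_{H^b_t}\lesssim\|h\|_{H^{b-1}_t}$ with $b:=1/2+\delta\in(1/2,1)$. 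The idea is to write $\int_0^t h(s)\,ds=\int_\R\frac{e^{it\tau}-1}{i\tau}\widehat h(\tau)\,d\tau$ and split the frequency $\tau$ into $|\tau|\le1$ and $|\tau|>1$. On $|\tau|\le1$ one expands $\frac{e^{it\tau}-1}{i\tau}=\sum_{j\ge0}c_j\,t^{j+1}\tau^j$, reducing matters to the Schwartz factors $\eta(t)t^{j+1}$ paired with the moments $m_j=\int_{|\tau|\le1}\tau^j\widehat h(\tau)\,d\tau$, each bounded by $\|h\|_{H^{b-1}_t}$ via Cauchy--Schwarz, the $j$-sum converging by the factorial decay of $c_j$. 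On $|\tau|>1$ one writes $\frac{e^{it\tau}-1}{i\tau}=\frac{e^{it\tau}}{i\tau}-\frac{1}{i\tau}$; the first piece produces $\eta(t)$ times a function whose $H^b_t$ norm is controlled by $\big(\int_{|\tau|>1}\langle\tau\rangle^{2(b-1)}|\widehat h|^2\big)^{1/2}$ (using that $b<1$ and that multiplication by $\eta$ is bounded on $H^b_t$), while the second piece is $\eta(t)$ times the constant $\int_{|\tau|>1}\widehat h(\tau)/(i\tau)\,d\tau$. It is exactly this last, constant term that forces the hypothesis $\delta>0$: by Cauchy--Schwarz its modulus is $\lesssim\|h\|_{H^{b-1}_t}$ only because $\int_{|\tau|>1}|\tau|^{-2b}\,d\tau<\infty$, i.e. $2b>1$.

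Finally, for the time-localization estimate I would combine a near-optimal extension with the boundedness of a sharp cutoff. Given $\widetilde u$ with $\widetilde u|_{\R^2\times[0,T]}=u$ and $\|\widetilde u\|_{X^{s,b}}\le2\|u\|_{X^{s,b}_T}$, the function $\chi_{[0,T]}\widetilde u$ is again an extension of $u$, so $\|u\|_{X^{s,b'}_T}\le\|\chi_{[0,T]}\widetilde u\|_{X^{s,b'}}$. Since $\chi_{[0,T]}(t)$ commutes with $U(-t)$ and with $\langle|(\xi,\mu)|\rangle^s$, the required bound reduces fiberwise to the one-dimensional claim
$$
\|\chi_{[0,T]}h\|_{H^{b'}_t}\lesssim T^{b-b'}\|h\|_{H^b_t},\qquad -\tfrac12<b'\le b<\tfrac12.
$$
This follows from the fact that a sharp cutoff is bounded on $H^\sigma_t$ for every $|\sigma|<1/2$ --- so that the jump of $\chi_{[0,T]}$ is invisible --- together with the rescaling $t\mapsto t/T$ that extracts the factor $T^{b-b'}$; here the restriction $|b'|,|b|<1/2$ is precisely what is needed. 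Chaining the three estimates and reinstating the weight $\langle|(\xi,\mu)|\rangle^s$ completes the argument.
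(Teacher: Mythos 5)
The paper does not actually prove this lemma: its ``proof'' is a one-line citation to Ginibre's expos\'e \cite{ginibre}, so any honest comparison is between your argument and the classical one the authors defer to. Your proposal is essentially a correct reconstruction of that classical proof, organized around the right structural identity $\|u\|_{X^{s,b}}=\|U(-t)u\|_{H^b_tH^s_{xy}}$, which collapses the homogeneous estimate to $\|\eta\|_{H^b_t}\|f\|_{H^s}$ and reduces the Duhamel estimate to the one-dimensional bound $\|\eta(t)\int_0^t h\|_{H^b_t}\lesssim\|h\|_{H^{b-1}_t}$ for $b=\tfrac12+\delta$. Your treatment of that 1D bound (Taylor expansion of $(e^{it\tau}-1)/(i\tau)$ on $|\tau|\le 1$, splitting off the oscillatory and constant pieces on $|\tau|>1$, and locating the need for $\delta>0$ in the convergence of $\int_{|\tau|>1}|\tau|^{-2b}\,d\tau$) is exactly the standard argument and is sound. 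What you gain over the paper is a self-contained proof; what you lose is nothing, since the authors prove nothing here.

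The one place where your write-up is too quick is the third estimate. The claim that ``the rescaling $t\mapsto t/T$ extracts the factor $T^{b-b'}$'' is only literally true for \emph{homogeneous} Sobolev norms; the inhomogeneous norm $\|f(\cdot/T)\|_{H^\sigma}$ does not scale by a clean power of $T$ (for $T\le 1$ one only gets $\langle \tau/T\rangle\ge\langle\tau\rangle$, an inequality in the wrong direction for part of the range). The standard repair, which you should substitute, is the chain: for $0\le b<\tfrac12$, H\"older plus the Sobolev embedding $H^b_t\hookrightarrow L^{2/(1-2b)}_t$ give $\|\chi_{[0,T]}h\|_{L^2_t}\lesssim T^{b}\|h\|_{H^b_t}$; dualizing gives $\|\chi_{[0,T]}h\|_{H^{-b}_t}\lesssim T^{b}\|h\|_{L^2_t}$; and composing (using $\chi_{[0,T]}^2=\chi_{[0,T]}$ together with the uniform boundedness of the sharp cutoff on $H^\sigma_t$ for $|\sigma|<\tfrac12$, which you correctly invoke) and interpolating covers all $-\tfrac12<b'\le b<\tfrac12$. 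With that substitution the proposal is complete.
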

\begin{proof}
These estimates are classical by now. See, for instance, \cite{ginibre}.
\end{proof}

The second result provides the linear estimates which are sufficient to prove Theorem \ref{lwp1}. 

\begin{lemma} \label{lines}
	Let $u_0\in L^2(\R^2)$. Then
\begin{equation}\label{s1}
\|U(t)u_0\|_{\str}\lesssim    \|u_0\|_{L^2_{xy}}\qquad   (\text{Strichartz's Estimate\,})
\end{equation}
and
\begin{equation}\label{s2}
\|\nabla  U(t)u_0\|_{\kse}\lesssim \|u_0\|_{L^2_{xy}}.\qquad
(\text{Kato's   Smoothing})
\end{equation}
In addition, if $u_0\in H^s(\R^2)$, $s>3/4$, and $T>0$ is fixed then there exists a constant $c(s,T)$ depending only on $s$ and $T$ such that
\begin{equation}\label{s3}
\|U(t)u_0\|_{\mne}\le c(s,T)\,\|u_0\|_{H^s}.\qquad (\text{Maximal Function})
\end{equation}
\end{lemma}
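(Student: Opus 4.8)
The plan is to handle the three estimates separately, all starting from the Fourier representation $\widehat{U(t)u_0}(\xi,\mu)=e^{it(\xi^3+\xi\mu^2)}\widehat{u_0}(\xi,\mu)$, whose phase $p(\xi,\mu)=\xi^3+\xi\mu^2=\xi(\xi^2+\mu^2)$ governs everything. The one algebraic fact I would exploit repeatedly is that $\partial_\xi p=3\xi^2+\mu^2\gtrsim\xi^2$ and $\gtrsim\mu^2$, so for each fixed $\mu$ the map $\xi\mapsto p(\xi,\mu)$ is a diffeomorphism with controlled Jacobian. I would begin with Kato smoothing \eqref{s2}, which is the most direct. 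Fixing $x$ and taking the Fourier transform in $(y,t)$ of $U(t)u_0(x,\cdot,\cdot)$, the $t$-transform produces $\delta(\tau-p(\xi,\mu))$; resolving this delta (unique root in $\xi$) and using Plancherel in $(y,t)$, the change of variables $\tau\mapsto\xi$, with Jacobian $(3\xi^2+\mu^2)^{-1}$, exactly cancels the derivative gain and yields
\[
\|\partial_x U(t)u_0(x,\cdot,\cdot)\|_{L^2_{yT}}^2\lesssim \int_{\R^2}\frac{\xi^2}{3\xi^2+\mu^2}\,|\widehat{u_0}(\xi,\mu)|^2\,d\xi\,d\mu .
\]
Since the multiplier $\xi^2/(3\xi^2+\mu^2)$ is bounded and the right-hand side is independent of $x$, this gives \eqref{s2} for $\partial_x$; the identical computation with $\mu^2/(3\xi^2+\mu^2)$ gives the $\partial_y$ part, hence $\|\nabla U(t)u_0\|_{\kse}\lesssim\|u_0\|_{L^2_{xy}}$.

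For the Strichartz estimate \eqref{s1} I would follow the standard $TT^*$ route. The main ingredient is the dispersive decay bound $\|U(t)u_0\|_{L^\infty_{xy}}\lesssim|t|^{-2/3}\|u_0\|_{L^1_{xy}}$, which by the exact scaling $U(t)[u_0(\lambda\,\cdot)]=(U(\lambda^3t)u_0)(\lambda\,\cdot)$ reduces to boundedness of the self-similar kernel $\int e^{i(x\xi+y\mu+p(\xi,\mu))}\,d\xi\,d\mu$, established by a stationary-phase/van der Corput analysis; this is precisely where the degeneracy of the Hessian of $p$ along the lines $\mu=\pm\sqrt3\,\xi$ must be controlled. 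Interpolating this decay with the conservation $\|U(t)u_0\|_{L^2_{xy}}=\|u_0\|_{L^2_{xy}}$ and feeding it into $TT^*$, the Hardy--Littlewood--Sobolev inequality applied to the resulting kernel $|t-t'|^{-2/3}$ produces exactly the exponent $q=3$ at $r=\infty$ (the pair $(3,\infty)$ is admissible for decay rate $2/3$ and lies off the forbidden endpoint since $q=3>2$), giving $\|U(t)u_0\|_{\str}\lesssim\|u_0\|_{L^2_{xy}}$.

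Finally, for the maximal function \eqref{s3} I would split the $3/4$ derivatives as $1/2+1/4$. First I reduce the transverse variable: applying the one-dimensional Sobolev embedding $L^\infty_y\hookleftarrow H^{1/2+}_y$ for each fixed $t$ and then Minkowski's inequality to exchange $L^\infty_T$ with $L^2_y$ gives
\[
\|U(t)u_0\|_{\mne}\lesssim\Big\|\sup_{t\in[0,T]}\big|U(t)\langle\partial_y\rangle^{\frac12+}u_0\big|\Big\|_{L^2_{xy}} .
\]
It then remains to prove the maximal-in-time estimate $\big\|\sup_{t\in[0,T]}|U(t)g|\big\|_{L^2_{xy}}\le c(s,T)\|g\|_{H^{1/4+}}$, the two-dimensional analogue of the Kenig--Ponce--Vega maximal function for the Airy equation: after a dyadic decomposition the $1/4$ power of $\langle\partial_x\rangle$ comes from the Airy oscillation in $x$ via van der Corput, while for fixed $\xi$ the transverse evolution $e^{it\xi\mu^2}$ is $L^2_y$-unitary and costs nothing. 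Since $\langle\partial_x\rangle^{1/4+}\langle\partial_y\rangle^{1/2+}\lesssim\langle(\xi,\mu)\rangle^{3/4+}$, combining the two steps yields \eqref{s3} for every $s>3/4$; note the finite time horizon forces the $T$-dependent constant, which is why this estimate, unlike the previous two, is stated locally in time.

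Across the three estimates the Kato smoothing is essentially free, and the main obstacle is the oscillatory-integral analysis underlying the other two: establishing the sharp $|t|^{-2/3}$ dispersive decay for \eqref{s1} and the sharp maximal-in-time bound for \eqref{s3}. Both require handling the ZK phase $p$ where its Hessian degenerates, a genuinely two-dimensional difficulty with no one-dimensional counterpart; I would expect the bulk of the technical work to be concentrated there, while the reductions (Plancherel plus Jacobian for Kato, Sobolev plus Minkowski for the transverse variable in the maximal function) are routine.
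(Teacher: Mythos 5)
The paper does not actually prove this lemma: it simply cites Faminskii \cite{f} (following Kenig--Ponce--Vega \cite{kpv}), so any comparison is really with that literature. Your arguments for \eqref{s2} and \eqref{s1} are correct and are essentially the standard ones. For Kato smoothing, the change of variables $\tau=\xi^3+\xi\mu^2$ at fixed $\mu$ (monotone since $\partial_\xi p=3\xi^2+\mu^2>0$), Plancherel in $(y,t)$, and the boundedness of $\xi^2/(3\xi^2+\mu^2)$ and $\mu^2/(3\xi^2+\mu^2)$ give exactly \eqref{s2}, uniformly in $x$. For Strichartz, the decay $\|U(t)u_0\|_{L^\infty_{xy}}\lesssim |t|^{-2/3}\|u_0\|_{L^1_{xy}}$ does hold (after scaling, integrate out $\mu$ exactly as a Fresnel integral and apply third--order van der Corput to the remaining $\xi$--integral, whose phase satisfies $\phi'''\geq 6$; this bypasses the Hessian degeneracy you mention), and $TT^*$ plus Hardy--Littlewood--Sobolev then yields the pair $(3,\infty)$.

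The maximal function estimate \eqref{s3} is where your argument breaks. The reduction via Sobolev embedding in $y$ is legitimate, but the remaining claim
$\bigl\|\sup_{t\in[0,T]}|U(t)g|\bigr\|_{L^2_{xy}}\lesssim\|g\|_{H^{1/4+}}$
is not justified and is in fact false. Your heuristic --- that for fixed $\xi$ the transverse evolution $e^{it\xi\mu^2}$ is $L^2_y$--unitary and ``costs nothing'' --- is irrelevant to a supremum in $t$: unitarity controls each fixed time slice, not the maximal function. Concretely, take $\widehat{g}=\chi_{[N,N+1]\times[0,1]}$, so $\|g\|_{H^s}\sim N^s$. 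For $t\sim 1$ stationary phase in $\mu$ and then in $\xi$ shows $|U(t)g|\sim (tN)^{-1}$ on a set of size $\sim tN\times tN$ centered near $x\approx -3tN^2$; as $t$ ranges over $[1/2,1]$ this set sweeps a region of measure $\sim N^3$, whence $\|\sup_t|U(t)g|\|_{L^2_{xy}}\gtrsim N^{1/2}$. So the global--in--space $L^2_{xy}$ maximal estimate requires at least $s\geq 1/2$ (and, by analogy with the Airy group, the actual threshold for this type of estimate is $3/4$, which is precisely why Kenig--Ponce--Vega and Faminskii need $s>3/4$; it is not the sum $1/2+1/4$ of a transverse Sobolev cost and an Airy maximal cost). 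Even if you replace your intermediate claim by a correct one at $s>3/4$, your factorization would deliver \eqref{s3} only for $s>5/4$, which is strictly weaker than the lemma. The estimate \eqref{s3} must instead be proved directly on the mixed norm $L^2_xL^\infty_{yT}$, as in \cite{f}: one linearizes the supremum by measurable functions $y(x),t(x)$ and bounds the kernel $\int\langle(\xi,\mu)\rangle^{-2s}e^{i((x-x')\xi+(y(x)-y(x'))\mu+(t(x)-t(x'))p(\xi,\mu))}\,d\xi\,d\mu$ by oscillatory--integral estimates handling $y$ and $t$ simultaneously; this is the genuinely hard part of the lemma and is missing from your proposal.
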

\begin{proof}
These estimates were proved by
Faminskii  \cite{f}  inspired in those ones obtained for the KdV equation by
Kenig, Ponce and Vega \cite{kpv}.
\end{proof}

\section{Proof of Theorem \ref{lwp}}\label{seclwp}

This section is devoted to prove Theorem \ref{lwp}.
As  we observed in  the  introduction we will use the equivalent integral equation \eqref{ie} and the contraction mapping principal to obtain our result.  The next bilinear estimates are the main tools in our analysis regarding the proof of Theorem \ref{lwp}.

\begin{proposition} \label{bilinear}
	Let $s>1/2$. There exists a small $\delta>0$ such that
	\begin{equation}\label{estimatex}
	\|u_xv\|_{X^{s,-\frac{1}{2}+2\delta}}\lesssim \|u\|_{X^{s,\frac{1}{2}+\delta}}\|v\|_{X^{s,\frac{1}{2}+\delta}}
	\end{equation}
	and
	\begin{equation}\label{estimatey}
	\|u_yv\|_{X^{s,-\frac{1}{2}+2\delta}}\lesssim \|u\|_{X^{s,\frac{1}{2}+\delta}}\|v\|_{X^{s,\frac{1}{2}+\delta}}.
	\end{equation}
\end{proposition}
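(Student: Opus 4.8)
The plan is to establish \eqref{estimatex} and \eqref{estimatey} by the dyadic resonance method in Bourgain spaces developed for the Zakharov--Kuznetsov equation in \cite{mp}; indeed, both are of the same type as the bilinear estimate for the quadratic ZK nonlinearity, the only novelty being that the derivative sits on a single factor ($u_x$ or $u_y$) rather than on the product. Since in the Fourier analysis a derivative merely contributes the corresponding component $|\xi_1|$ or $|\mu_1|$ of the first factor's frequency, I would treat the two estimates in parallel. First I would pass to the dual formulation: writing $\sigma_i:=\tau_i-(\xi_i^3+\xi_i\mu_i^2)$ for the modulation of the $i$-th factor and using that the dual of $X^{s,-\frac{1}{2}+2\delta}$ is $X^{-s,\frac{1}{2}-2\delta}$, estimate \eqref{estimatex} is equivalent to controlling, for every $w$, the trilinear form
\begin{equation*}
\Big|\int_{*} |\xi_1|\,\widehat{u}_1\,\widehat{v}_2\,\overline{\widehat{w}_3}\Big|\lesssim \|u\|_{X^{s,\frac{1}{2}+\delta}}\|v\|_{X^{s,\frac{1}{2}+\delta}}\|w\|_{X^{-s,\frac{1}{2}-2\delta}},
\end{equation*}
where $\int_*$ denotes integration over the hyperplane $\sum_i(\xi_i,\mu_i,\tau_i)=0$ and $\widehat{u}_i=\widehat u(\xi_i,\mu_i,\tau_i)$. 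Inserting the Littlewood--Paley decompositions $\sum_{N_i}P_{N_i}$ and $\sum_{L_i}Q_{L_i}$ reduces matters to a \emph{summable} dyadic bound for each frequency--modulation block, with the normalized profiles placed in $L^2$.

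The main engine is the bilinear $L^2$ smoothing estimate for the linear ZK flow (cf.\ \cite{mp}): for two functions localized in spatial frequency and modulation, their product is controlled in $L^2_{xyt}$ with a gain measured by the smaller frequency and the smaller modulation, a manifestation of the transversality of the two-dimensional symbol $\xi^3+\xi\mu^2$. The second ingredient is the resonance relation: on the hyperplane one has $\sigma_1+\sigma_2+\sigma_3=-\Omega$ with $\Omega:=\sum_i(\xi_i^3+\xi_i\mu_i^2)$, so that $\max_i\langle\sigma_i\rangle\gtrsim\langle\Omega\rangle$. A direct computation shows $\Omega=3\xi_1\xi_2\xi_3+R$, where $R$ collects the transverse contributions coming from the $\xi\mu^2$ part of the symbol; away from the degenerate (near-parallel) regime $\langle\Omega\rangle$ is comparable to a full power of the largest frequency times a second frequency, which is exactly what allows us to recover the derivative lost to $\partial_x$ or $\partial_y$.

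With these tools in hand I would split into the usual frequency regimes according to the ordering of $N_1,N_2,N_3$: the high--low interactions (one dominant frequency, the other two comparable to the output) and the high--high--low interaction (two large comparable input frequencies producing a low output). In each block at least one $\langle\sigma_i\rangle$ is $\gtrsim\langle\Omega\rangle$; I would place that large modulation on whichever factor leaves the bilinear $L^2$ estimate applicable to the remaining two, apply it, and thereby trade the large modulation for the frequency gain needed to absorb $|\xi_1|$ (resp.\ $|\mu_1|$) together with the Sobolev weights $\langle(\xi_i,\mu_i)\rangle^{\pm s}$. Because $s>1/2$ leaves a strict surplus over the scaling threshold $s=-1$, the exponents of the dyadic parameters produced this way are all negative, so the sums over $N_1,N_2,N_3$ and $L_1,L_2,L_3$ converge; the small parameter $\delta>0$ enters only through the factors $\langle\sigma_i\rangle^{\pm\delta}$ to render the modulation sums summable.

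The main obstacle is the high--high--low interaction, where the output carries an $O(1)$ frequency (hence no Sobolev weight to help) while the derivative costs a full power of the large frequency $N_1\sim N_2$. Here one must genuinely recover one derivative from the dispersive smoothing, i.e.\ the lower bound on $\langle\Omega\rangle$, once traded into the bilinear $L^2$ gain, must beat the $N_1$ loss. The delicate point is the $u_y$ nonlinearity, where the derivative $|\mu_1|$ is transverse to the main dispersive direction; one must verify that the transverse part $R$ of $\Omega$ still controls $|\mu_1|$, and this is the step I would check most carefully, since it is precisely where the genuinely two-dimensional structure of the symbol is used. Finally, the near-parallel subregime in which $\Omega$ degenerates must be isolated and disposed of by the elementary observation that there the offending frequency component is already controlled by the others, so that no net derivative is lost.
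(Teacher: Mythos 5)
Your plan follows essentially the same route as the paper: dualize, perform dyadic decompositions in frequency and modulation, and run the Molinet--Pilod bilinear $L^2$ smoothing estimates through the standard case analysis of frequency interactions ($LH\rightarrow H$, $HL\rightarrow H$, $HH\rightarrow L$, $HH\rightarrow H$), the only new feature relative to \cite{mp} being the weight $|\xi_1|$ (resp.\ $|\mu_1|$) in place of $|\xi|$. The point you flag as delicate for the $\partial_y$ nonlinearity resolves immediately in this framework, because the bilinear estimate of \cite{mp} gains a full power of the largest \emph{total} spatial frequency $|(\xi,\mu)|$, so $|\mu_1|\le N_1$ is absorbed exactly as $|\xi_1|$ is.
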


 Before proving Proposition \ref{bilinear} we remind some relevant estimates in our arguments.

\begin{lemma}\label{lemmaL4}
	The following estimate holds
	$$
	\|u\|_{L^4_{xyt}}\lesssim \|u\|_{X^{0,\frac{5}{6}+}}.
	$$
\end{lemma}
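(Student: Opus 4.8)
The plan is to read this $L^4$ bound as a Strichartz estimate recast in Bourgain space, and to obtain it by transferring the linear Strichartz estimate \eqref{s1} of Lemma \ref{lines} to the space $X^{0,b}$ and then closing the $L^4_{xyt}$ norm by H\"older. Concretely, I would first record the transference principle (in the spirit of Lemma \ref{linearbou}; see \cite{ginibre}): whenever the group satisfies a linear space--time bound $\|U(t)f\|_{L^p_tL^q_{xy}}\lesssim\|f\|_{L^2_{xy}}$, one has $\|u\|_{L^p_tL^q_{xy}}\lesssim\|u\|_{X^{0,b}}$ for every $b>1/2$. Applying this to \eqref{s1} gives $\|u\|_{L^3_tL^\infty_{xy}}\lesssim\|u\|_{X^{0,\frac12+}}$.

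Next I would interpolate the spatial norm pointwise in $t$ via $\|u(t)\|_{L^4_{xy}}\le\|u(t)\|_{L^\infty_{xy}}^{1/2}\|u(t)\|_{L^2_{xy}}^{1/2}$ and then apply H\"older in $t$ with the conjugate pair $(3/2,3)$ to the squares, which yields
\[
\|u\|_{L^4_{xyt}}\le\|u\|_{L^3_tL^\infty_{xy}}^{1/2}\,\|u\|_{L^6_tL^2_{xy}}^{1/2}.
\]
The first factor is already controlled by the transferred Strichartz bound. For the second factor I would view $t\mapsto u(\cdot,\cdot,t)$ as an $L^2_{xy}$-valued function and invoke the (Hilbert-valued) one-dimensional Sobolev embedding $H^{1/3}_t(L^2_{xy})\hookrightarrow L^6_t(L^2_{xy})$; since $U(t)$ is unitary on $L^2_{xy}$, a shift in the modulation variable identifies $\|u\|_{H^{1/3}_t(L^2_{xy})}$ with $\|u\|_{X^{0,1/3}}$, so that $\|u\|_{L^6_tL^2_{xy}}\lesssim\|u\|_{X^{0,\frac13}}$. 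Combining the two factors and using $X^{0,\frac12+}\hookrightarrow X^{0,\frac13}$ gives $\|u\|_{L^4_{xyt}}\lesssim\|u\|_{X^{0,\frac12+}}$, which is in fact stronger than the stated bound (since $\|u\|_{X^{0,\frac12+}}\le\|u\|_{X^{0,\frac56+}}$); the lemma follows a fortiori.

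The main---and really the only nontrivial---step is the transference principle, where the $e^{it\tau}$ modulation is pulled out by Minkowski's inequality and the residual $\int\langle\tau\rangle^{-2b}\,d\tau$ is made finite by Cauchy--Schwarz; this is exactly what forces $b>1/2$ and is the origin of the $\tfrac12+$ threshold (hence of any admissible $b\ge\tfrac12+$, in particular $\tfrac56+$). Everything else is H\"older in the space and time variables together with the scalar/Hilbert-valued Sobolev embedding in $t$, so no delicate resonance analysis or frequency localization of the ZK symbol $\xi^3+\xi\mu^2$ is needed: the dispersive content has been absorbed once and for all into the Strichartz estimate \eqref{s1}.
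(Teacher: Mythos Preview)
Your argument is correct and in fact yields the stronger bound $\|u\|_{L^4_{xyt}}\lesssim\|u\|_{X^{0,\frac12+}}$, from which the stated estimate with exponent $\tfrac56+$ follows a fortiori. One small point worth making explicit: the transference step needs \eqref{s1} on the whole time axis, not only on $[0,T]$; this is harmless since the implicit constant in \eqref{s1} is $T$--independent (the estimate is scale--invariant), so one lets $T\to\infty$.

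The paper itself does not prove the lemma but simply defers to \cite[Corollary~3.2]{mp}, and Molinet--Pilod's route is genuinely different: they first establish a direct linear $L^4$ Strichartz estimate for frequency--localized free ZK solutions, which requires an oscillatory--integral/curvature analysis of the dispersion surface $\tau=\xi^3+\xi\mu^2$, and then transfer to Bourgain space. Your approach is more elementary in that it avoids any new stationary--phase work, recycling instead the $L^3_tL^\infty_{xy}$ Strichartz bound already recorded in Lemma~\ref{lines} and closing via H\"older against the soft estimate $\|u\|_{L^6_tL^2_{xy}}\lesssim\|u\|_{X^{0,1/3}}$ coming from one--dimensional Sobolev embedding in $t$. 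The trade--off is that the Molinet--Pilod machinery simultaneously produces the localized bilinear bounds \eqref{NN}--\eqref{NN1} needed later in Proposition~\ref{bilinear}, whereas your shortcut delivers only the $L^4$ bound; but for this particular lemma it is cleaner and even yields a sharper exponent.
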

\begin{proof}
	See \cite[Corollary 3.2]{mp}.
\end{proof}

\begin{lemma}
	Let $N_1$, $N_2$, $L_1$, $L_2$ be dyadic numbers. Then,
	\begin{equation}\label{NN}
	\|(P_{N_1}Q_{L_1}u)(P_{N_2}Q_{L_2}v)\|_{L^2} \lesssim (N_1\wedge N_2)(L_1\wedge L_2)^{\frac{1}{2}}\|P_{N_1}Q_{L_1}u\|_{L^2}\|P_{N_2}Q_{L_2}v\|_{L^2}.
	\end{equation}
	If, in addition, $N_2\geq 4N_1$ or $N_1\geq4N_2$ then
	\begin{equation}\label{NN1}
	\begin{split}
	\|(P_{N_1}Q_{L_1}u)&(P_{N_2}Q_{L_2}v)\|_{L^2}\\
	&\lesssim \frac{(N_1\wedge N_2)^{\frac{1}{2}}}{N_1\vee N_2} (L_1\vee L_2)^\frac{1}{2}(L_1\wedge L_2)^{\frac{1}{2}}\|P_{N_1}Q_{L_1}u\|_{L^2}\|P_{N_2}Q_{L_2}v\|_{L^2}.
	\end{split}
	\end{equation}
\end{lemma}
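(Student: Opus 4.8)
The plan is to pass to the space-time Fourier side and reduce both inequalities to a single uniform bound on the measure of the region where the two localized factors can interact. Set $p(\xi,\mu):=\xi^3+\xi\mu^2$, $f_1:=\mathcal{F}(P_{N_1}Q_{L_1}u)$ and $f_2:=\mathcal{F}(P_{N_2}Q_{L_2}v)$, so that by Plancherel $\|(P_{N_1}Q_{L_1}u)(P_{N_2}Q_{L_2}v)\|_{L^2}=\|f_1\ast f_2\|_{L^2}$, each $f_i$ being supported in $R_i:=\{(\xi,\mu,\tau):|(\xi,\mu)|\sim N_i,\ |\tau-p(\xi,\mu)|\sim L_i\}$. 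Writing $\zeta=(\xi,\mu,\tau)$ and applying Cauchy--Schwarz in the convolution variable, one obtains $\|f_1\ast f_2\|_{L^2}\le(\sup_\zeta|E_\zeta|)^{1/2}\|f_1\|_{L^2}\|f_2\|_{L^2}$, where $E_\zeta:=R_2\cap(\zeta-R_1)$. Thus both \eqref{NN} and \eqref{NN1} follow once $|E_\zeta|$ is bounded uniformly in $\zeta$, and the whole matter becomes a geometric count on the supports.

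For \eqref{NN} I would argue crudely. By symmetry assume $N_1\le N_2$ and integrate in $\zeta_2=(\xi_2,\mu_2,\tau_2)\in E_\zeta$. The two frequency constraints force $(\xi_2,\mu_2)$ into the intersection of two annuli of radii $\sim N_1$ and $\sim N_2$, whose area is at most that of the smaller one, i.e. $\lesssim(N_1\wedge N_2)^2$; for each such $(\xi_2,\mu_2)$ the two modulation constraints confine $\tau_2$ to an interval of length $\lesssim L_1\wedge L_2$. Hence $|E_\zeta|\lesssim(N_1\wedge N_2)^2(L_1\wedge L_2)$, which is exactly \eqref{NN}.

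For \eqref{NN1} the improvement comes from the dispersion. With $\zeta=\zeta_1+\zeta_2$ and $\sigma_i:=\tau_i-p(\xi_i,\mu_i)$, $\sigma:=\tau-p(\xi,\mu)$, the resonance identity reads $\sigma_1+\sigma_2=\sigma+\Omega$, where $\Omega:=p(\xi,\mu)-p(\xi_1,\mu_1)-p(\xi_2,\mu_2)$. Since $|\sigma_i|\sim L_i$, membership in $E_\zeta$ forces $|\Omega+\sigma|\lesssim L_1\vee L_2$, so $\Omega$ lies in a fixed interval of length $\lesssim L_1\vee L_2$. A direct computation gives $\partial_{\xi_2}\Omega=(3\xi_1^2+\mu_1^2)-(3\xi_2^2+\mu_2^2)$, and under the separation hypothesis, say $N_2\ge 4N_1$, the second term is $\sim N_2^2$ while the first is $\lesssim N_1^2$, so $|\partial_{\xi_2}\Omega|\sim N_2^2$ with constant sign. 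Integrating in the order $\tau_2$, then $\xi_2$, then $\mu_2$: the $\tau_2$-fibre has length $\lesssim L_1\wedge L_2$; for fixed $\mu_2$ the monotonicity of $\xi_2\mapsto\Omega$ together with $|\partial_{\xi_2}\Omega|\sim N_2^2$ confines $\xi_2$ to a set of length $\lesssim(L_1\vee L_2)/N_2^2$; and $\mu_2=\mu-\mu_1$ ranges over an interval of length $\lesssim N_1$. This yields $|E_\zeta|\lesssim(L_1\wedge L_2)\,N_1\,(L_1\vee L_2)/N_2^2$, and taking the square root gives \eqref{NN1}.

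The hard part, and the crux of the whole argument, is the dispersive lower bound $|\partial_{\xi_2}\Omega|\sim N_2^2$: it is precisely the frequency separation $N_1\ge 4N_2$ or $N_2\ge 4N_1$ that rules out cancellation between $3\xi_1^2+\mu_1^2$ and $3\xi_2^2+\mu_2^2$ and secures both the correct size and the constant sign (hence the monotonicity) needed to foliate $E_\zeta$ by level sets of $\Omega$ in the $\xi_2$ direction; in the balanced regime this bound degenerates and only the cruder \eqref{NN} survives. Apart from this resonance estimate, everything is routine bookkeeping of the supports, in the spirit of the bilinear computations in \cite{mp}.
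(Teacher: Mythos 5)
Your reduction of both inequalities to the fibre bound $\|f_1\ast f_2\|_{L^2}\le(\sup_\zeta|E_\zeta|)^{1/2}\|f_1\|_{L^2}\|f_2\|_{L^2}$ and your proof of \eqref{NN} are correct and standard. (For the record, the paper does not prove this lemma at all: it is quoted verbatim from \cite[Proposition 3.6]{mp}, so there is no in-paper argument to compare with.) The proof you give of \eqref{NN1}, however, has a genuine gap at exactly the step you identify as the crux. The claim that $|\partial_{\xi_2}\Omega|=|(3\xi_1^2+\mu_1^2)-(3\xi_2^2+\mu_2^2)|\sim N_2^2$ with constant sign on the supports is false in the borderline case $N_2=4N_1$ permitted by the hypothesis. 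The point is that $\nabla p(\xi,\mu)=(3\xi^2+\mu^2,2\xi\mu)$ is not injective: $\nabla p(a,0)=(3a^2,0)=\nabla p(0,\sqrt3\,a)$, and for $a\in[\tfrac{5}{2\sqrt3}N_1,\tfrac85N_1]$ the pair $(\xi_1,\mu_1)=(a,0)$, $(\xi_2,\mu_2)=(0,\sqrt3\,a)$ lies in the product of the two dyadic annuli with $N_2=4N_1$ (the ratio $\sqrt3\approx1.73$ exceeds the minimal ratio $\approx1.56$ allowed by the cutoffs). At such points $\partial_{\xi_2}\Omega=0$, and comparing with $(\xi_2,\mu_2)=(2.5N_1,0)$ one sees the sign is not constant either. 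Your argument is sound when the separation is by a factor $8$ or more, but the case $N_2=4N_1$ is part of the statement and is used in the paper's $Low\times High$ and $High\times High\to Low$ interactions.

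More importantly, this is not a fixable constant: the whole strategy of bounding $\sup_\zeta|E_\zeta|$ cannot yield \eqref{NN1} in that case. Writing $g(\xi_2,\mu_2)=p(\xi_2,\mu_2)+p(\xi-\xi_2,\mu-\mu_2)$ and fixing $(\xi,\mu)=(a,\sqrt3\,a)$, the binomial identity $t^3+3t(a-t)^2+(a-t)^3+3t^2(a-t)=a^3$ shows that $g\equiv a^3$ along the entire line $\{(t,\sqrt3(a-t)):t\in\R\}$, a segment of length $\sim N_1$ of which lies inside the admissible frequency region; one checks $\nabla g$ vanishes identically there and the transverse Hessian eigenvalue is $8a\sim N_1$. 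Consequently, for $\zeta=(a,\sqrt3\,a,a^3)$ and $L_1=L_2=L$ the set $E_\zeta$ contains a tube of measure $\gtrsim N_1\cdot(L/N_1)^{1/2}\cdot L=N_1^{1/2}L^{3/2}$, which exceeds the bound $\frac{N_1\wedge N_2}{(N_1\vee N_2)^2}(L_1\vee L_2)(L_1\wedge L_2)\sim L^2/N_1$ that your Cauchy--Schwarz step would require whenever $L\ll N_1^3$. The inequality \eqref{NN1} itself is merely saturated, not violated, by this configuration, because $f_1\ast f_2$ is large only on a small set of output frequencies $\zeta$; capturing that requires an argument that averages over $\zeta$ (as in the actual proof of \cite[Proposition 3.6]{mp}) rather than a single pointwise bound on $|E_\zeta|$. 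You should either reproduce that finer argument or retain the citation.
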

\begin{proof}
	See \cite[Proposition 3.6]{mp}.
\end{proof}

\begin{proof}[Proof of Proposition \ref{bilinear}]
	We will prove  only \eqref{estimatex}. It will be clear from the proof itself that \eqref{estimatey} may be established in a similar fashion. The proof follows the same arguments as the ones  in \cite[Proposition 4.1]{mp}. So, we only give the main steps and follow closely the notation in \cite{mp}.   It suffices, by duality, to show the estimate
	$$
	I\lesssim \|u\|_{L^2_{xyt}}\|v\|_{L^2_{xyt}}\|w\|_{L^2_{xyt}},
	$$
	where
	$$
	I=\int_{\R^6}\Gamma_{\xi,\mu,\tau}^{\xi_1,\mu_1,\tau_1}\widehat{w}(\xi,\mu,\tau)\widehat{u}(\xi_1,\mu_1,\tau_1)\widehat{v}(\xi_2,\mu_2,\tau_2)d\nu,
	$$
	the functions $\widehat{w},\widehat{u},\widehat{v}$ are nonnegative, and
	$$
	\Gamma_{\xi,\mu,\tau}^{\xi_1,\mu_1,\tau_1}=|\xi_1|\langle |(\xi,\mu)|\rangle^s \langle \sigma\rangle^{-\frac{1}{2}+2\delta} \langle|(\xi_1,\mu_1)|\rangle^{-s}\langle \sigma_1\rangle^{-\frac{1}{2}-\delta} \langle|(\xi_2,\mu_2)|\rangle^{-s}\langle \sigma_2\rangle^{-\frac{1}{2}-\delta}
	$$
	$$
	d\nu=d\xi\,d\xi_1\,d\mu\,d\mu_1\,d\tau\,d\tau_1, \quad \xi_2=\xi-\xi_1, \quad \mu_2=\mu-\mu_1, \quad \tau_2=\tau-\tau_1,
	$$
	$$
	\sigma=\tau-\omega(\xi,\mu), \qquad \sigma_i=\tau_i-\omega(\xi_i,\mu_i), \qquad \omega(\xi,\mu)=\xi^3+\xi\mu^2.
	$$
	The main difference when compared our situation and that of  Proposition 4.1 in \cite{mp}  is that we have $|\xi_1|$ in the definition of $\Gamma_{\xi,\mu,\tau}^{\xi_1,\mu_1,\tau_1}$ instead of $|\xi|$.
	By using dyadic decomposition, one may rewrite $I$ as
	\begin{equation*}
	I=\sum_{N,N_1,N_2}I_{N,N_1,N_2},
	\end{equation*}
	with
	$$
	I_{N,N_1,N_2}=\int_{\R^6}\Gamma_{\xi,\mu,\tau}^{\xi_1,\mu_1,\tau_1}\widehat{P_Nw}(\xi,\mu,\tau)\widehat{P_{N_1}u}(\xi_1,\mu_1,\tau_1)\widehat{P_{N_2}v}(\xi_2,\mu_2,\tau_2)d\nu.
	$$
	Using that $(\xi,\mu)=(\xi_1,\mu_1)+(\xi_2,\mu_2)$ one decomposes the sum into five cases.
	\begin{itemize}
		\item[(i)] $Low\times Low\rightarrow Low$ interactions:
		$$
		I_{LL\rightarrow L}=\sum_{N\leq4,N_1\leq4,N_2\leq4}I_{N,N_1,N_2}.
		$$
		\item[(ii)] $Low\times High\rightarrow High$ interactions:
		$$
		I_{LH\rightarrow H}=\sum_{4\leq N_2,N_1\leq N_2/4,N\sim N_2}I_{N,N_1,N_2}.
		$$
		\item[(iii)] $High\times Low\rightarrow High$ interactions:
		$$
		I_{HL\rightarrow H}=\sum_{4\leq N_1,N_2\leq N_1/4,N\sim N_1}I_{N,N_1,N_2}.
		$$
		\item[(iv)] $High\times High\rightarrow Low$ interactions:
		$$
		I_{HH\rightarrow L}=\sum_{4\leq N_1,N\leq N_1/4,N_2\sim N_1}I_{N,N_1,N_2}.
		$$
		\item[(v)] $High\times High\rightarrow High$ interactions:
		$$
		I_{HH\rightarrow H}=\sum_{N\sim N_1\sim N_2}I_{N,N_1,N_2}.
		$$
	\end{itemize}
	With the above decomposition we see that
	$$
	I=	I_{LL\rightarrow L}+	I_{LH\rightarrow H}+	I_{HL\rightarrow H}+I_{HH\rightarrow L} + 	I_{HH\rightarrow H},
	$$
	and our task is then to show that each term on the right-hand side is bounded by $\|u\|_{L^2}\|v\|_{L^2}\|w\|_{L^2}$.\\
	
	\textit{Estimate for $I_{LL\rightarrow L}$}. Since all frequencies are bounded this is the simplest case. Indeed, by Parseval's identity, H\"older's inequality and Lemma \ref{lemmaL4},
	\[
	\begin{split}
	I_{N,N_1,N_2}&\lesssim \left\| \left(\frac{\widehat{P_{N_1}u}}{\langle \sigma_1\rangle^{\frac{1}{2}+\delta}} \right)^\vee \right\|_{L^4} \left\| \left(\frac{\widehat{P_{N_2}v}}{\langle \sigma_2\rangle^{\frac{1}{2}+\delta}} \right)^\vee \right\|_{L^4}\|P_Nw\|_{L^2}\\
	&\lesssim  \|P_{N_1}u\|_{L^2}\|P_{N_2}v\|_{L^2}\|P_Nw\|_{L^2},
	\end{split}
	\]
	giving 
	\begin{equation*}
	I_{LL\rightarrow L}\lesssim \|u\|_{L^2}\|v\|_{L^2}\|w\|_{L^2}.
	\end{equation*}\\

	\textit{Estimate for $I_{LH\rightarrow H}$.} By using dyadic decomposition on the modulation variables one writes
	$$
	I_{N,N_1,N_2}=\sum_{L,L_1,L_2} I_{N,N_1,N_2}^{L,L_1,L_2},
	$$
	where
	$$
	I_{N,N_1,N_2}^{L,L_1,L_2}=\int_{\R^6}\Gamma_{\xi,\mu,\tau}^{\xi_1,\mu_1,\tau_1}\widehat{P_NQ_Lw}(\xi,\mu,\tau)\widehat{P_{N_1}Q_{L_1}u}(\xi_1,\mu_1,\tau_1)\widehat{P_{N_2}Q_{L_2}v}(\xi_2,\mu_2,\tau_2)d\nu.
	$$
	By using Cauchy-Schwarz in $(\xi,\mu,\tau)$ and \eqref{NN1} we obtain
	\[
	\begin{split}
	I_{N,N_1,N_2}^{L,L_1,L_2}&\lesssim N_1L^{-\frac{1}{2}+2\delta}N^sL_1^{-\frac{1}{2}-\delta}N_1^{-s} L_2^{-\frac{1}{2}-\delta}N_2^{-s}\|(P_{N_1}Q_{L_1}u)(P_{N_2}Q_{L_2}v)\|_{L^2}\|P_NQ_Lw\|_{L^2}\\
	& \lesssim N_1L^{-\frac{1}{2}+2\delta}N^sL_1^{-\frac{1}{2}-\delta}N_1^{-s} L_2^{-\frac{1}{2}-\delta}N_2^{-s}\frac{N_1^{\frac{1}{2}}}{N_2}L_1^{\frac{1}{2}}L_2^{\frac{1}{2}}\\
	&\qquad \qquad\times \|P_{N_1}Q_{L_1}u\|_{L^2}\|P_{N_2}Q_{L_2}v\|_{L^2}\|P_NQ_Lw\|_{L^2}\\
	&\lesssim L^{-\frac{1}{2}+2\delta}L_1^{-\delta}L_2^{-\delta}N_1^{-(s-\frac{1}{2})}\|P_{N_1}Q_{L_1}u\|_{L^2}\|P_{N_2}Q_{L_2}v\|_{L^2}\|P_NQ_Lw\|_{L^2},
	\end{split}
	\]
	where in the last inequality we used that $N\sim N_2$ and $\frac{1}{N_2}\lesssim \frac{1}{N_1}$. Consequently,
	\[
	\begin{split}
	I_{LH\rightarrow H}&\lesssim \sum_{L,L_1,L_2}L^{-\frac{1}{2}+2\delta}L_1^{-\delta}L_2^{-\delta} \sum_{N_1\leq N_2/4,N\sim N_2}N_1^{-(s-\frac{1}{2})}\|P_{N_1}Q_{L_1}u\|_{L^2}\|P_{N_2}Q_{L_2}v\|_{L^2}\|P_NQ_Lw\|_{L^2}\\
	&\lesssim \|u\|_{L^2}\sum_{N\sim N_2}\|P_{N_2}v\|_{L^2}\|P_Nw\|_{L^2}\\
	&\lesssim \|u\|_{L^2}\|v\|_{L^2}\|w\|_{L^2}.
	\end{split}
	\]\\

	\textit{Estimate for $I_{HL\rightarrow H}$.} This case is similar to the last one, because now we have $N\sim N_1$ and $N_2\leq N_1$, so that
	\[
	\begin{split}
	I_{N,N_1,N_2}^{L,L_1,L_2}\lesssim L^{-\frac{1}{2}+2\delta}L_1^{-\delta}L_2^{-\delta}N_2^{-(s-\frac{1}{2})}\|P_{N_1}Q_{L_1}u\|_{L^2}\|P_{N_2}Q_{L_2}v\|_{L^2}\|P_NQ_Lw\|_{L^2}.
	\end{split}
	\]\\

	\textit{Estimate for $I_{HH\rightarrow L}$.} We set $\widetilde{f}(\xi,\mu,\tau)=f(-\xi,-\mu,-\tau)$. In addition, interpolation between \eqref{NN} and \eqref{NN1} (see \cite[Eq. (4.20)]{mp}) gives, for $\theta\in[0,1]$,
	\begin{equation*}
		\begin{split}
			\|\widetilde{(P_{N_1}Q_{L_1}u)}&(P_{N}Q_{L}w)\|_{L^2}\\
			&\lesssim \frac{(N_1\wedge N)^{\frac{1}{2}(1+\theta)}}{(N_1\vee N)^{1-\theta}} (L_1\vee L)^{\frac{1}{2}(1-\theta)}(L_1\wedge L)^{\frac{1}{2}}\|P_{N_1}Q_{L_1}u\|_{L^2}\|P_{N_2}Q_{L_2}v\|_{L^2}.
		\end{split}
	\end{equation*}
	As in the last two cases we obtain
	\[
	\begin{split}
	I_{N,N_1,N_2}^{L,L_1,L_2}&\lesssim L^{-\frac{1}{2}+2\delta}L_1^{-\frac{1}{2}-\delta}L_2^{-\frac{1}{2}-\delta} \frac{N_1N^s}{N_1^sN_2^s} \|\widetilde{(P_{N_1}Q_{L_1}u)}(P_{N}Q_{L}w)\|_{L^2}\|P_{N_2}Q_{L_2}v\|_{L^2}\\
	&\lesssim  L^{-\frac{1}{2}+2\delta}L_1^{-\frac{1}{2}-\delta}L_2^{-\frac{1}{2}-\delta} \frac{N_1N^s}{N_1^sN_2^s} \frac{N^{\frac{1}{2}(1+\theta)}}{N_1^{1-\theta}}  (L_1\vee L)^{\frac{1}{2}(1-\theta)}(L_1\wedge L)^{\frac{1}{2}}\\
	&\qquad\qquad\times \|P_{N_1}Q_{L_1}u\|_{L^2}\|P_{N}Q_{L}w\|_{L^2} \|P_{N_2}Q_{L_2}v\|_{L^2}.
	\end{split}
	\]
	Assuming without loss of generality that $L=L_1\vee L$ and using that $N\leq N_1$ and $N_2\sim N_1$ we deduce
	\[
	\begin{split}
	I_{N,N_1,N_2}^{L,L_1,L_2}\lesssim L^{2\delta-\frac{\theta}{2}}L_1^{-\delta}L_2^{-\frac{1}{2}-\delta}  N_1^{-(s-\frac{1}{2}-2\theta)}
	\|P_{N_1}Q_{L_1}u\|_{L^2}\|P_{N}Q_{L}w\|_{L^2} \|P_{N_2}Q_{L_2}v\|_{L^2}.
	\end{split}
	\]
	Now we choose $\theta\in(0,1)$ and $\delta>0$ small enough such that $0<2\theta<s-\frac{1}{2}$ and $\delta<\frac{\theta}{4}$. Consequently, using Cauchy-Schwarz in $N$ and $N_1$,
	\[
	\begin{split}
	I_{HH\rightarrow L}&\lesssim \sum_{N\leq N_1/4,N_2\sim N_1} N_1^{-(s-\frac{1}{2}-2\theta)}
	\|P_{N_1}Q_{L_1}u\|_{L^2}\|P_{N}Q_{L}w\|_{L^2} \|P_{N_2}Q_{L_2}v\|_{L^2}\\
	&\lesssim  \|u\|_{L^2}\|v\|_{L^2}\|w\|_{L^2}.	
	\end{split}\]\\

	\textit{Estimate for $I_{HH\rightarrow H}$.} Note in this case we have $N\sim N_1\sim N_2$. So it does not matter if in the definition of $\Gamma_{\xi,\mu,\tau}^{\xi_1,\mu_1,\tau_1}$ appears $|\xi_1|$ or $|\xi|$ the estimate is the same, because we can always replace $N_1$ by $N$. Thus this estimate is exactly the same one given in \cite[page 358]{mp}, that is,
	$$
	I_{HH\rightarrow H}\lesssim  \|u\|_{L^2}\|v\|_{L^2}\|w\|_{L^2}.
	$$
	
	Collecting all estimates above, the proof of the proposition is completed.
\end{proof}

With Proposition \ref{bilinear} in hand we are able to apply the fixed point theorem in a closed ball of $X_T^{s,\frac{1}{2}+\delta}$ in order to prove Theorem \ref{lwp}. The proof is quite standard so we omit the details. We only point out that after localization (in time) of the right-hand side of \eqref{ie} and using Lemma \ref{linearbou} we only need the estimate
\begin{equation}\label{curcialestimate}
\|u\p_xu+u_x\p_x\ol(u)+u_y\p_y\ol(u)\|_{X^{s,-\frac{1}{2}+2\delta}}\lesssim \|u\|^2_{X^{s,\frac{1}{2}+\delta}}.
\end{equation}
To see that \eqref{curcialestimate} holds we  note   that  
\begin{equation}\label{m1m2}
m_1(\xi,\eta)=\frac{\xi^2}{\xi^2+\eta^2} \quad  \mbox{and} \quad
m_2(\xi,\eta)=\frac{\xi\eta}{\xi^2+\eta^2}
\end{equation}
are Fourier multipliers in $X^{s,\frac{1}{2}+\delta}$, $s\geq0$.  In particular, the operators $\p_x\ol$ and $\p_y\ol$ are bounded in $X^{s,\frac{1}{2}+\delta}$, that is, 
\begin{equation*}
\|\p_x\ol(u)\|_{X^{s,\frac{1}{2}+\delta}} \lesssim \|u\|_{X^{s,\frac{1}{2}+\delta}}
\end{equation*}
and
\begin{equation*}
\|\p_y\ol(u)\|_{X^{s,\frac{1}{2}+\delta}}\lesssim \|u\|_{X^{s,\frac{1}{2}+\delta}}.
\end{equation*}
Consequently, after applying Proposition \ref{bilinear}, we get
\[
\begin{split}
\|u\p_xu+u_x\p_x\ol(u)+u_y\p_y\ol(u)\|_{X^{s,-\frac{1}{2}+2\delta}}&\lesssim  \|u\|^2_{X^{s,\frac{1}{2}+\delta}}+\|u\|_{X^{s,\frac{1}{2}+\delta}}\|\p_x\ol (u)\|_{X^{s,\frac{1}{2}+\delta}}\\
&\quad+\|\p_y\ol(u)\|_{X^{s,\frac{1}{2}+\delta}} \\
&\lesssim \|u\|_{X^{s,\frac{1}{2}+\delta}}^2.
\end{split}
\]

The proof of Theorem \ref{lwp} is thus completed.

\section{Proof of Theorem \ref{lwp1}} \label{seclwp1}

This section is devoted to prove Theorem \ref{lwp1}.  First we will establish all the nonlinear estimates to use in the proof. For simplicity, let us define
\begin{equation}\label{e1}
E_1(u)(t):=\int_0^t U(t-t')(uu_x)(t')\,dt',
\end{equation}
\begin{equation}\label{e2}
E_2(u)(t):=\int_0^t U(t-t')(u_x\p_x\ol(u))(t')\,dt',
\end{equation}
\begin{equation}\label{e3}
E_3(u)(t):=\int_0^t U(t-t')(u_y\p_y\ol(u))(t')\,dt'
\end{equation}
\begin{equation}\label{e4}
	E_4(u)(t):=\int_0^tU(t-t')(\varphi_xu+2\varphi u_x)dt',
\end{equation}
and
\begin{equation}\label{e5}
	E_5(u)(t):=\int_0^tU(t-t')(\varphi_x\partial_x\mathcal{L}(u))dt'.
\end{equation}

\medskip

Next, we estimate $E_i$, $i=1,\ldots,5$ in the norms needed to close the argument in the fixed point theorem. Before proceeding we remark that functions $m_1$ and $m_2$ in \eqref{m1m2} are also Fourier multipliers in $H^s(\R^2)$, $s\geq0$. Thus the operators $\p_x\ol$ and $\p_y\ol$ are bounded in $H^s(\R^2)$, $s\geq0$.

\begin{lemma}\label{l0}
Let $E_1$ be defined as in \eqref{e1}. Then,
\begin{equation}\label{l0e1}
\begin{split}
\|E_1(u)\|_{L^{\infty}_TH^1}
\lesssim T^{2/3}\|\p_x       u\|_{\str}\|u\|_{L^{\infty}_TH^1}+T^{1/2}\,\|\p^2u\|_{\kse}\|u\|_{\mne},
\end{split}
\end{equation}
\begin{equation}\label{l0e1a}
\begin{split}
\|\p^2   E_1(u)\|_{\kse}
\lesssim T^{2/3}\|\p_x u\|_{\str}\|u\|_{L^{\infty}_TH^1}
+T^{1/2}\,\|\p^2u\|_{\kse}\|u\|_{\mne},
\end{split}
\end{equation}
\begin{equation}\label{l0e2}
\begin{split}
\|\nabla  E_1(u)\|_{\str}\lesssim T^{2/3}\|\p_xu\|_{\str}
\|u\|_{L^{\infty}_TH^1}
+T^{1/2}\|u\|_{\mne} \|\p^2u\|_{L^{\infty}_TL^2_{xy}},
\end{split}
\end{equation}
and
\begin{equation}\label{l0e3}
\begin{split}
\|E_1(u)\|_{\mne}\lesssim c(1,T)\,\Big\{T^{2/3}\|\p_xu\|_{\str}\|u\|_{L^{\infty}_TH^1}   +T^{1/2}\|u\|_{\mne}   \|\p^2u\|_{L^{\infty}_TL^2_{xy}}\Big\}.
\end{split}
\end{equation}
\end{lemma}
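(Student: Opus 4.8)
The plan is to treat the four bounds \eqref{l0e1}--\eqref{l0e3} in parallel, since each controls the Duhamel operator $E_1$ (or one of its derivatives) in a single ``working norm'' — namely $L^\infty_TH^1$ in \eqref{l0e1}, $\kse$ in \eqref{l0e1a}, $\str$ in \eqref{l0e2}, and $\mne$ in \eqref{l0e3} — for which a homogeneous linear estimate is at hand, either from Lemma \ref{lines} or from the fact that $U(t)$ is an isometry on every $H^s$. First I would pass to the retarded (inhomogeneous) form of each estimate: writing $E_1(u)(t)=\int_0^tU(t-t')(uu_x)(t')\,dt'$ and using Minkowski's integral inequality together with the Christ--Kiselev lemma — and, for the smoothing estimate \eqref{s2}, its dual — one reduces every bound to a purely nonlinear estimate on $uu_x$ (or on $\p_x(uu_x)$ and $\p_y(uu_x)$) in a suitable ``dual'' mixed space-time norm. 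Since $uu_x=\tfrac12\p_x(u^2)$, the one-derivative gain built into \eqref{s2} is precisely what allows the second-order output norms in \eqref{l0e1a}--\eqref{l0e3} to be reduced to first-order information on the integrand.

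Once the reduction is carried out, the heart of the matter is a family of Leibniz-type product estimates. I would expand $\p_x(uu_x)=u_x^2+uu_{xx}$ and $\p_y(uu_x)=u_yu_x+uu_{xy}$ and split the resulting terms into a \emph{low-order} group ($uu_x$, $u_x^2$, $u_xu_y$) and a \emph{high-order} group ($uu_{xx}$, $uu_{xy}$). For the low-order terms I would place the differentiated factor in the Strichartz space $\str$ and the remaining factor in $L^\infty_TH^1$; Hölder in time with $\tfrac13+\tfrac23=1$, together with $\|u_x\|_{L^{3/2}_TL^2_{xy}}\le T^{2/3}\|u\|_{L^\infty_TH^1}$, then produces exactly the first summand $T^{2/3}\|\p_xu\|_{\str}\|u\|_{L^\infty_TH^1}$ of each estimate. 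For the high-order terms I would place the undifferentiated factor $u$ in the maximal-function space $\mne=L^2_xL^\infty_{yT}$, bounding it pointwise by its maximal function (an $L^2_x$ function), pair it against the twice-differentiated factor, and use Hölder in time to supply the $T^{1/2}$.

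The genuine difficulty, and the step I expect to be the main obstacle, is exactly the high-order group $uu_{xx},uu_{xy}$: two derivatives fall on one factor, so these products cannot be absorbed by the Strichartz norm and must be handled through the smoothing and maximal-function norms, with the order of integration in $x,y,t$ chosen very carefully. This is also where the two different shapes of the second summand originate. For \eqref{l0e1} and \eqref{l0e1a} the reduction lands on the norm $\|\cdot\|_{L^1_TL^2_{xy}}$, which Hölder in time bounds by $T^{1/2}\|\cdot\|_{L^2_{xyT}}$; bounding $u$ by its maximal function then leaves the second factor carrying a supremum in $x$, i.e.\ in $\kse$, which gives $T^{1/2}\|\p^2u\|_{\kse}\|u\|_{\mne}$. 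For \eqref{l0e2} and \eqref{l0e3} the natural dual norm is instead the predual $L^1_xL^2_{yT}$ of the Kato space, and a Cauchy--Schwarz in $x$ pairs the $L^2_x$ maximal function of $u$ against the second factor, which sits in $L^2_x$; this leaves it in $L^2_{xyT}\le T^{1/2}\|\cdot\|_{L^\infty_TL^2_{xy}}$, yielding the summand $T^{1/2}\|u\|_{\mne}\|\p^2u\|_{L^\infty_TL^2_{xy}}$. The only extra point for \eqref{l0e3} is that the maximal-function estimate \eqref{s3} costs regularity $s>3/4$ and carries the constant $c(s,T)$, which is the source of the factor $c(1,T)$; applying it at $s=1$, legitimate since $u\in H^1$, closes that last estimate.
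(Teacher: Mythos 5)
Your proof is correct in substance, and its heart --- the splitting of $\p(uu_x)$ into the first-order products $uu_x,\,u_x^2,\,u_xu_y$ (paired $L^3_T\times L^{3/2}_T$ in time and $L^\infty_{xy}\times L^2_{xy}$ in space to produce $T^{2/3}\|\p_xu\|_{\str}\|u\|_{L^\infty_TH^1}$) and the second-order products $uu_{xx},\,uu_{xy}$ (bounded by $T^{1/2}\|uu_{xx}\|_{L^2_{xyT}}\lesssim T^{1/2}\|u\|_{\mne}\|\p^2u\|_{\kse}$) --- is exactly the paper's computation. The difference is in the reduction, where the paper is much more economical than what you describe: it never invokes Christ--Kiselev or any dual/retarded smoothing estimate. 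Minkowski's inequality and the group property $U(t-t')=U(t)U(-t')$ reduce all four bounds to $\intt\|uu_x\|_{H^1}\,dt'$ or $\intt\|\nabla(uu_x)\|_{L^2_{xy}}\,dt'$, after which only the homogeneous estimates \eqref{s1}--\eqref{s3} and the unitarity of $U(t)$ on $H^1$ are used. Two details of your reduction would need repair if you insisted on it: first, the homogeneous smoothing \eqref{s2} gains one derivative while the output norm in \eqref{l0e1a} carries two, so the second-order products cannot be avoided, and the claimed ``reduction to first-order information on the integrand'' is not what your (correct) final bounds actually reflect; second, for \eqref{l0e3} a predual-$L^1_xL^2_{yT}$ reduction does not compose with the maximal-function estimate \eqref{s3}, which requires $H^s$ data with $s>3/4$ rather than $L^2$ data --- the paper (like your own closing remark) simply writes $E_1(u)=U(t)\bigl(\intt U(-t')(uu_x)\,dt'\bigr)$ and applies \eqref{s3} at $s=1$. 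Once the dual-estimate scaffolding is dropped in favor of the plain Minkowski reduction, everything you wrote goes through.
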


\medskip

\begin{proof} First we show estimate \eqref{l0e1}. From \eqref{e1}, Minkowski's inequality,   group properties and Leibniz'  rule we  have
\begin{equation}\label{l0e11}
\begin{split}
\|E_1(u)(t)\|_{H^1}&\le  \intt \|uu_x\|_{L^2_{xy}}\,dt
+ \intt  \big(\|uu_{xx}\|_{L^2_{xy}}+\|u_xu_x\|_{L^2_{xy}}\big)\,dt\\
& \quad + \intt\big(\|uu_{xy}\|_{L^2_{xy}}+\|u_yu_x\|_{L^2_{xy}}\big)\,dt.
\end{split}
\end{equation}
The first term on the right hand side of \eqref{l0e11} can be estimate as follows:
\begin{equation}\label{l0e12}
\begin{split}
\intt \|uu_x\|_{L^2_{xy}}\,dt  \lesssim
 \intt \|u\|_{L^2_{xy}}\|u_x\|_{L^{\infty}_{xy}}\,dt\lesssim       T^{2/3}\|\p_x    u\|_{\str}    \|u\|_{L^{\infty}_TL^2_{xy}},
\end{split}
\end{equation}
where  we  have  used  H\"older's  inequality  in  space  and  then in time.

Next we estimate the second term on the right-hand side of \eqref{l0e11}. An argument similar to the one applied
in                            \eqref{l0e12}                           yields
\begin{equation}\label{l0e13}
\begin{split}
\intt \|u_xu_x\|_{L^2_{xy}}\,dt \lesssim
 \intt  \|u_x\|_{L^2_{xy}}\|u_x\|_{L^{\infty}_{xy}}\,dt
\lesssim T^{2/3}\|\p_x   u\|_{\str}   \|u_x\|_{L^{\infty}_TL^2_{xy}}.
\end{split}
\end{equation}
H\"older's  inequality in time allows  us to obtain
\begin{equation}\label{l0e14}
\intt  \|uu_{xx}\|_{L^2_{xy}}\,dt  \lesssim T^{1/2}\|uu_{xx}\|_{L^2_TL^2_{xy}}\lesssim T^{1/2}\|u\|_{\mne} \|u_{xx}\|_{\kse}.
\end{equation}
The estimate of the third term on the right-hand side of \eqref{l0e11} follows the same
argument  as  in  \eqref{l0e13}  and  \eqref{l0e14}.  Thus
\begin{equation}\label{l0e15}
\begin{split}
\intt   \big(\|u_xu_y\|_{L^2_{xy}}+ \|uu_{xy}\|_{L^2_{xy}}\big)\,dt'
\lesssim   T^{2/3}\|\p_x   u\|_{\str}   \|u_y\|_{L^{\infty}_TL^2_{xy}}+          T^{1/2}\|u\|_{\mne}         \|u_{xy}\|_{\kse}.
\end{split}
\end{equation}
Combining \eqref{l0e11} with inequalities \eqref{l0e12}--\eqref{l0e15} we obtain    \eqref{l0e1}.

To establish estimate \eqref{l0e1a} we first apply  Minkowski's inequality, the smoothing effect \eqref{s2} and the argument above to get
\begin{equation*}
\begin{split}
\|\p^2E_1(u)\|_{\kse}&\le \intt\big(2\|\p_x(uu_x)\|+\|\p_y(uu_x)\|\big)\,dt'\\
&\lesssim  T^{2/3}\|\p_x  u\|_{\str}  \big(\|u_x\|_{L^{\infty}_TL^2_{xy}}+
\|u_y\|_{L^{\infty}_TL^2_{xy}}\big)\\
&\quad + T^{1/2}\|u\|_{\mne} \big(\|u_{xx}\|_{\kse}+\|u_{xy}\|_{\kse}\big).
\end{split}
\end{equation*}

The inequality \eqref{l0e2} follows using Minkowski's inequality, the smoothing effect
\eqref{s1}    and    the    argument    used    to   obtain   \eqref{l0e1}:
\begin{equation*}
\begin{split}
\|\nabla E_1(u)\|_{\str}&\le \intt\big(\|\p_x(uu_x)\|+\|\p_y(uu_x)\|\big)\,dt'\\
&\lesssim  T^{2/3}\|\p_x  u\|_{\str}  \big(\|u_x\|_{L^{\infty}_TL^2_{xy}}+
\|u_y\|_{L^{\infty}_TL^2_{xy}}\big)\\
&\quad + T^{1/2}\|u\|_{\mne} \big(\|u_{xx}\|_{\kse}+\|u_{xy}\|_{\kse}\big).
\end{split}
\end{equation*}

Finally, to get inequality \eqref{l0e3} we use group properties, the maximal function estimate \eqref{s3}    and  the argument above,
\begin{equation*}
\begin{split}
\|E_1(u)\|_{\mne}&=\|U(t)\Big(\intt      U(-t')(uu_x)(t')\,dt'\Big)\|_{\mne}\\
&\le c(1,T)\|\intt U(-t')(uu_x)(t')\,dt'\|_{H^1}\\
&\le c(1,T) \intt\|uu_x\|_{H^1}\,dt'\\
&\le  c(1,T)  T^{2/3}\|\p_x u\|_{\str} \big(\|u_x\|_{L^{\infty}_TL^2_{xy}}+
\|u_y\|_{L^{\infty}_TL^2_{xy}}\big)\\
&\quad +c(1,T) T^{1/2}\|u\|_{\mne} \big(\|u_{xx}\|_{\kse}+\|u_{xy}\|_{\kse}\big).
\end{split}
\end{equation*}
The proof of the lemma is thus completed.
\end{proof}

\begin{lemma}\label{l1}
Let $E_2$ be defined as in \eqref{e2}. Then,
\begin{equation}\label{l1e1}
\begin{split}
\|E_2(u)\|_{L^{\infty}_TH^1}
\lesssim T^{2/3}\|\p_x  u\|_{\str}\|u\|_{L^{\infty}_TH^1}+T^{1/2}\,\|\p^2u\|_{\kse}\|\p_x\ol(u)\|_{\mne},
\end{split}
\end{equation}
\begin{equation}\label{l1e1a}
\begin{split}
\|\p^2  E_2(u)\|_{\kse}
\lesssim T^{2/3}\|\p_x       u\|_{\str}\|u\|_{L^{\infty}_TH^1}
+T^{1/2}\,\|\p^2u\|_{\kse}\|\p_x\ol(u)\|_{\mne},
\end{split}
\end{equation}
\begin{equation}\label{l1e2}
\begin{split}
\|\nabla E_2(u)\|_{\str}\lesssim T^{1/2}\|\p^2u\|_{\kse}\|\p_x\ol(u)\|_{\mne}
 +T^{2/3}\|\p_xu\|_{\str} \|\p_xu\|_{L^{\infty}_TL^2_{xy}},
\end{split}
\end{equation}
and
\begin{equation}\label{l1e3}
\begin{split}
\|E_2(u)\|_{\mne}
\lesssim   c(1,T)\,\Big\{T^{2/3}\|\p_x     u\|_{\str}\|u\|_{L^{\infty}_TH^1}
+  T^{1/2}\,\|\p^2u\|_{\kse}\|\p_x\ol(u)\|_{\mne}\Big\}.
\end{split}
\end{equation}
\end{lemma}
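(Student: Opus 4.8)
The plan is to follow the proof of Lemma \ref{l0} almost verbatim, replacing the undifferentiated factor $u$ in the nonlinearity $uu_x$ by $g:=\p_x\ol(u)$. The one structural feature that makes this substitution harmless is that, since $\p_x\ol$ has symbol $m_1$ (see \eqref{m1m2}), it is a zeroth-order Fourier multiplier: it commutes with $\p_x$ and $\p_y$ and, by the remark preceding Lemma \ref{l0}, is bounded on $H^s(\R^2)$ for $s\ge0$, in particular on $L^2_{xy}$. Thus whenever a copy of $\p_x\ol(u)$ (or one of its derivatives) is placed in an $L^2$-based norm, it can be traded for the corresponding norm of $u$ at no cost in regularity. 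The only place where this trade is \emph{not} performed is in the maximal-function factor: because $m_1$ is known to be bounded only on the isotropic spaces $H^s$, and not a priori on the anisotropic space $\mne=L^2_xL^\infty_{yT}$, the term $\p_x\ol(u)$ must be kept as is, which is exactly why $\|\p_x\ol(u)\|_{\mne}$ (rather than $\|u\|_{\mne}$) appears on the right-hand sides of \eqref{l1e1}--\eqref{l1e3}.

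To prove \eqref{l1e1} I would first use Minkowski's inequality and the unitarity of the group $U(t)$ to bound $\|E_2(u)\|_{L^\infty_TH^1}\lesssim\intt\|u_x\p_x\ol(u)\|_{H^1}\,dt'$, and then expand the $H^1$ norm by Leibniz' rule into the products coming from $u_x\p_x\ol(u)$ and its first derivatives $u_{xx}\p_x\ol(u)+u_x\p_x^2\ol(u)$ and $u_{xy}\p_x\ol(u)+u_x\p_x\p_y\ol(u)$. Each product is then distributed in one of exactly two ways, mirroring \eqref{l0e12}--\eqref{l0e15}: either I place the factor carrying an $x$-derivative in $L^\infty_{xy}$ and control it by $\|\p_xu\|_{\str}$ after H\"older in time (producing the $T^{2/3}$ term), or I place a second-order derivative of $u$ in $\kse$ and the remaining factor $\p_x\ol(u)$ in $\mne$ via H\"older in $x$ and in time (producing the $T^{1/2}$ term). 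The commutation identities $\p_x^2\ol(u)=\p_x\ol(u_x)$ and $\p_x\p_y\ol(u)=\p_x\ol(u_y)$, together with the $L^2$-boundedness of $\p_x\ol$, show that $u_x\p_x\ol(u)$, $u_x\p_x^2\ol(u)$ and $u_x\p_x\p_y\ol(u)$ fall into the first category with cost $\|u\|_{L^\infty_TH^1}$, while $u_{xx}\p_x\ol(u)$ and $u_{xy}\p_x\ol(u)$ fall into the second with cost $\|\p^2u\|_{\kse}\|\p_x\ol(u)\|_{\mne}$; summing gives \eqref{l1e1}.

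The remaining three estimates are obtained by the same product bounds, merely feeding the nonlinearity into the appropriate linear estimate for the Duhamel operator. For \eqref{l1e1a} I would write $\p^2E_2(u)=\intt\nabla U(t-t')\p(u_x\p_x\ol(u))\,dt'$ and invoke Kato's smoothing \eqref{s2} to absorb one of the two derivatives, reducing matters to $\intt\big(\|\p_x(u_x\p_x\ol(u))\|_{L^2}+\|\p_y(u_x\p_x\ol(u))\|_{L^2}\big)\,dt'$, which is handled by exactly the distribution above. Estimate \eqref{l1e2} is identical but uses the Strichartz bound \eqref{s1} in place of \eqref{s2} (now no derivative is gained, so the single remaining $\nabla$ lands on the nonlinearity), and \eqref{l1e3} uses the maximal-function bound \eqref{s3} with $s=1>3/4$ to reduce to $c(1,T)\intt\|u_x\p_x\ol(u)\|_{H^1}\,dt'$, i.e.\ precisely the quantity already estimated in \eqref{l1e1}.

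I do not expect any genuine obstacle: the argument is entirely parallel to Lemma \ref{l0}. The only point requiring care is the bookkeeping for the two products $u_x\p_x^2\ol(u)$ and $u_x\p_x\p_y\ol(u)$, where the derivative falls on the nonlocal factor; here one must remember to commute the derivative through $\p_x\ol$ and use its $L^2$-boundedness, so that these terms cost only one derivative of $u$ (and hence land in the $\str$/$H^1$ term) rather than appearing to require a second-order gain.
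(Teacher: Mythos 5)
Your proposal is correct and follows exactly the route the paper intends: the paper's own proof of Lemma \ref{l1} consists solely of the remark that one repeats the arguments of Lemma \ref{l0}, and your adaptation --- trading $\p_x\ol(u)$ and its derivatives for $u$ in $L^2$-based norms via the boundedness of the multiplier $m_1$, while keeping $\|\p_x\ol(u)\|_{\mne}$ intact in the maximal-function slot --- is precisely the bookkeeping that justifies the stated right-hand sides. No gaps.
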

\begin{proof}  The inequalities \eqref{l1e1}-\eqref{l1e3} are obtained using
the arguments applied to show the corresponding estimates in Lemma \ref{l0},
so   we  will  omit the details.
\end{proof}

\begin{lemma}\label{l2}
Let $E_3$ be defined as in \eqref{e3}. Then,
\begin{equation}\label{l2e1}
\begin{split}
\|E_3(u)\|_{L^{\infty}_TH^1}
\lesssim T^{2/3}\|\p_y       u\|_{\str}\|u\|_{L^{\infty}_TH^1}+cT^{1/2}\,\|\p^2u\|_{\kse}\|\p_y\ol(u)\|_{\mne},
\end{split}
\end{equation}
\begin{equation}\label{l2e1a}
\begin{split}
\|\p^2 E_3(u)\|_{\kse}\lesssim T^{2/3}\|\p_y  u\|_{\str}\|u\|_{L^{\infty}_TH^1}+T^{1/2}\,\|\p^2u\|_{\kse}\|\p_y\ol(u)\|_{\mne},
\end{split}
\end{equation}
\begin{equation}\label{l2e2}
\begin{split}
\|\nabla E_3(u)\|_{\str}\lesssim T^{1/2}\|\p^2u\|_{\kse}
\|\p_y\ol(u)\|_{\mne} +T^{2/3}\|\p_yu\|_{\str}    \|\p_y   u\|_{L^{\infty}_TL^2_{xy}},
\end{split}
\end{equation}
and
\begin{equation}\label{l2e3}
\begin{split}
\|E_3(u)\|_{\mne}\lesssim  c(1,T)\,\Big\{T^{2/3}\|\p_y  u\|_{\str}\|u\|_{L^{\infty}_TH^1} +       T^{1/2}\,\|\p^2u\|_{\kse}\|\p_y\ol(u)\|_{\mne}\Big\}.
\end{split}
\end{equation}
\end{lemma}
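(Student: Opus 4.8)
The plan is to mirror the proof of Lemma \ref{l0} exactly, since $E_3$ has the same bilinear structure as $E_1$ with the operator $\p_y\ol$ inserted in the second factor. The key structural observation is that $\p_y\ol$ is a bounded Fourier multiplier on $H^s(\R^2)$ for $s\ge 0$ (via the symbol $m_2$ in \eqref{m1m2}, as noted just before Lemma \ref{l0}), so every quantity involving $\p_y\ol(u)$ that appears after applying Leibniz' rule can be controlled by the corresponding quantity for $u$ itself. Thus the four estimates \eqref{l2e1}--\eqref{l2e3} will follow from the same four group-property/Minkowski/Strichartz/Kato/maximal-function arguments used in Lemma \ref{l0}, with the single bookkeeping change that the transverse derivative $\p_y$ now falls on the \emph{first} factor $u_y$ of the nonlinearity $u_y\,\p_y\ol(u)$ rather than on $u_x$.

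Concretely, I would begin with \eqref{l2e1}. After applying Minkowski's inequality and the group isometry on $H^1$, one is reduced to bounding $\intt\|u_y\,\p_y\ol(u)\|_{H^1}\,dt'$. Expanding the $H^1$ norm by Leibniz' rule produces the two types of terms seen in \eqref{l0e11}: a ``low-derivative'' term of the shape $\|u_y\|_{L^2_{xy}}\,\|\p_y\ol(u)\|_{L^\infty_{xy}}$-type products, handled by H\"older in space followed by H\"older in time to generate the $T^{2/3}\|\p_y u\|_{\str}\|u\|_{L^\infty_T H^1}$ factor (using the Strichartz norm of $\p_y u$ as in \eqref{l0e12}--\eqref{l0e13}); and a ``high-derivative'' term of the shape $\|\p^2 u\|\,\|\p_y\ol(u)\|$, handled by H\"older in time to generate the $T^{1/2}\|\p^2 u\|_{\kse}\|\p_y\ol(u)\|_{\mne}$ factor exactly as in \eqref{l0e14}. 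In every place where the boundedness of $\p_y\ol$ on $H^s$ is needed, it replaces $\|\p_y\ol(u)\|$ by $\|u\|$ with the same Sobolev/Kato index, so no new norm beyond those appearing in the statement is required.

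For the remaining three estimates I would proceed in parallel with the corresponding parts of Lemma \ref{l0}. For \eqref{l2e1a} and \eqref{l2e2} I would apply Minkowski's inequality together with Kato's smoothing \eqref{s2} and Strichartz's estimate \eqref{s1}, respectively, reducing matters to $\intt\|\nabla(u_y\,\p_y\ol(u))\|\,dt'$ and then running the same Leibniz/H\"older splitting. For \eqref{l2e3} I would use the group property to write $E_3(u)=U(t)\int_0^t U(-t')(u_y\,\p_y\ol(u))\,dt'$, apply the maximal function estimate \eqref{s3} at $s=1$ to pull out the constant $c(1,T)$, and then bound the resulting $H^1$-integral exactly as in \eqref{l0e3}. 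I do not anticipate a genuine obstacle here: the only point demanding a moment's care is the verification that $\p_y\ol$ is $H^s$-bounded so that $\|\p_y\ol(u)\|_{\mne}$-type quantities stay inside the class \eqref{p1p1}--\eqref{p4p4}; this is precisely the multiplier remark preceding Lemma \ref{l0}, and once it is invoked the computation is routine, which is why the authors relegate the analogue \eqref{l1e1}--\eqref{l1e3} in Lemma \ref{l1} to ``we will omit the details.''
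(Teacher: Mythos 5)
Your proposal is correct and coincides with the paper's own treatment: the paper proves Lemma \ref{l2} by explicitly deferring to the arguments of Lemma \ref{l0}, which is exactly the mirroring you carry out, including the use of the multiplier boundedness of $\p_y\ol$ on $L^2$-based norms and the retention of $\|\p_y\ol(u)\|_{\mne}$ (a component of the contraction norm) in the high-derivative term. No further comparison is needed.
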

\begin{proof} To obtain estimates \eqref{l2e1}-\eqref{l2e3} we apply similar
arguments to the ones used to prove the corresponding estimates in Lemma \ref{l0}.
Thus we  will also omit the  details.
\end{proof}

\begin{lemma}\label{l3}
Let $E_1$, $E_2$ and $E_3$ be defined as above. Then,
\begin{equation}\label{ol1}
\begin{split}
\|\p_x \ol (E_1)\|_{\mne}+\|\p_y\ol(E_1)\|_{\mne}&\lesssim
c(1,T)\Big\{\,T^{2/3}\|\p_x        u\|_{\str}\|u\|_{L^{\infty}_TH^1}\\
&\quad+T^{1/2}\,\|\p^2u\|_{\kse}\|u\|_{\mne}\Big\},
\end{split}
\end{equation}
\begin{equation}\label{ol2}
\begin{split}
\|\p_x              \ol              (E_2)\|_{\mne}+\|\p_y\ol(E_2)\|_{\mne}&
\lesssim     c(1,T)\,\Big\{\,T^{2/3}\|\p_x    u\|_{\str}\|u\|_{L^{\infty}_TH^1}\\
&\quad+T^{1/2}\,\|\p^2u\|_{\kse}\|\p_x\ol(u)\|_{\mne}\Big\},
\end{split}
\end{equation}
and
\begin{equation}\label{ol3}
\begin{split}
\|\p_x\ol(E_3)\|_{\mne}+\|\p_y                             \ol(E_3)\|_{\mne}
&\lesssim    c(1,T)\,\Big\{T^{2/3}\|\p_y    u\|_{\str}\|u\|_{L^{\infty}_TH^1}\\
&\quad+T^{1/2}\,\|\p^2u\|_{\kse}\|\p_y\ol(u)\|_{\mne}\Big\}.
\end{split}
\end{equation}
\end{lemma}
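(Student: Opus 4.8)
The plan is to treat the three estimates \eqref{ol1}--\eqref{ol3} simultaneously by exploiting two structural facts. First, the operators $\p_x\ol$ and $\p_y\ol$ are Fourier multipliers in the spatial variables (with symbols $m_1$ and $m_2$ from \eqref{m1m2}) and therefore commute both with the unitary group $U(t)$ and with the time integral defining each $E_i$. Secondly, as recorded just before Lemma \ref{l0}, these multipliers are bounded on $H^s(\R^2)$ for $s\ge0$, in particular on $H^1(\R^2)$. Consequently, for any of the nonlinearities $F\in\{uu_x,\ u_x\p_x\ol(u),\ u_y\p_y\ol(u)\}$ I would use
$$
\p_x\ol\Big(\int_0^t U(t-t')F(t')\,dt'\Big)=\int_0^t U(t-t')\,\p_x\ol F(t')\,dt',
$$
and likewise for $\p_y\ol$.

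First I would reduce the $\mne=L^2_xL^{\infty}_{yT}$ norm to an $H^1$ time integral exactly as in the proof of \eqref{l0e3}. Applying Minkowski's inequality, and then for each fixed $t'$ the maximal function estimate \eqref{s3} (with $s=1$, which is admissible since $1>3/4$) together with the time-translation invariance of $U$, yields
$$
\|\p_x\ol(E_1)\|_{\mne}\le\intt\|U(\cdot-t')\,\p_x\ol(uu_x)(t')\|_{\mne}\,dt'\le c(1,T)\intt\|\p_x\ol(uu_x)\|_{H^1}\,dt'.
$$
Using the boundedness of $\p_x\ol$ on $H^1$ I would then bound the right-hand side by $c(1,T)\intt\|uu_x\|_{H^1}\,dt'$, and the identical computation handles $\p_y\ol(E_1)$ via the $H^1$-boundedness of $\p_y\ol$.

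At this stage the surviving quantity $\intt\|uu_x\|_{H^1}\,dt'$ is precisely the one already controlled inside the proof of Lemma \ref{l0}: combining \eqref{l0e11}--\eqref{l0e15} gives $\intt\|uu_x\|_{H^1}\,dt'\lesssim T^{2/3}\|\p_x u\|_{\str}\|u\|_{L^{\infty}_TH^1}+T^{1/2}\|\p^2u\|_{\kse}\|u\|_{\mne}$, which is exactly the right-hand side of \eqref{ol1}. For \eqref{ol2} and \eqref{ol3} I would repeat the same reduction with the nonlinearities $u_x\p_x\ol(u)$ and $u_y\p_y\ol(u)$, so that the remaining time integrals $\intt\|u_x\p_x\ol(u)\|_{H^1}\,dt'$ and $\intt\|u_y\p_y\ol(u)\|_{H^1}\,dt'$ coincide with the ones already bounded in the proofs of Lemmas \ref{l1} and \ref{l2}, producing the stated right-hand sides.

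Since every step is a direct transcription of the arguments already used for $E_1$, $E_2$, $E_3$ in the $\mne$ norm, I do not expect any genuinely new obstacle. The only point that requires care is the commutation of $\p_x\ol$ and $\p_y\ol$ with $U(t)$ and the time integral, together with their $H^1$-boundedness; once this is invoked, the additional multiplier is harmless and the estimates follow from the previous lemmas.
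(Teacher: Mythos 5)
Your proposal is correct and follows essentially the same route as the paper: commute $\p_x\ol$ and $\p_y\ol$ with $U(t)$ and the Duhamel integral, apply Minkowski and the maximal function estimate \eqref{s3} with $s=1$, invoke the $H^1$-boundedness of these multipliers, and reduce to the time integrals $\intt\|F\|_{H^1}\,dt'$ already controlled in Lemmas \ref{l0}--\ref{l2}. No substantive difference from the paper's argument.
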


\begin{proof} We will only prove \eqref{ol1} since the other inequalities follow similarly. We first observe
that  $\p_x  \ol$  and  $\p_y  \ol$  commute  with the operator $U(t)$. Then
using group properties,  \eqref{s3}, \eqref{A},
and  Minkowski's inequality we  get
\begin{equation*}
\begin{split}
\|\p_x \ol(E_1)\|_{\mne}&\le \|U(t)\Big(\p_x \ol\intt U(-t')(uu_x)(t')\,dt'\Big)\|_{\mne}\\
&\le  c(1,T)\,\|\p_x \ol\intt U(-t')(uu_x)(t')\,dt'\|_{H^1}\\
& \le c(1,T)\,\intt \|uu_x\|_{H^1}\,dt'.
\end{split}
\end{equation*}
The estimate \eqref{ol1} now follows by using the argument employed to obtain \eqref{l0e1}.
This  completes the  proof.
\end{proof}

We have completed all the estimates for the terms $E_1$, $E_2$ and $E_3$ essential for the proof of Theorem \ref{lwp1}. Note that terms $E_4$ and $E_5$ contain the functions $\varphi$ and $\varphi_{x}$.  Here a careful estimate need to be performed because the function $\varphi$ does not belong to $L^2(\R^2)$. The crucial property is that $\varphi(x-\omega t)$ belongs to $L^2_xL^\infty_T$, and $\varphi(x-\omega t)$ and $\varphi_x(x-\omega t)$ belong to $L^\infty_{xT}$.

\begin{lemma} \label{lemmaE4}
	Let $E_4$ be defined as in \eqref{e4}. Then,
	\begin{equation}\label{ll1}
	\|E_4(u)\|_{L^\infty_TH^1}\lesssim T\|u\|_{L^\infty_TH^1}+T^{1/2}\|\partial^2 u\|_{L^\infty_xL^2_{yT}},
	\end{equation}
	\begin{equation}\label{ll2}
	\|\nabla E_4(u)\|_{L^3_TL^\infty_{xy}}\lesssim T\|u\|_{L^\infty_TH^1}+T^{1/2}\|\partial^2 u\|_{L^\infty_xL^2_{yT}},
	\end{equation}
	\begin{equation}\label{ll3}
	\|\partial^2E_4(u)\|_{L^\infty_xL^2_{yT}}\lesssim T\|u\|_{L^\infty_TH^1}+T^{1/2}\|\partial^2 u\|_{L^\infty_xL^2_{yT}},
	\end{equation}
	\begin{equation}\label{l4}
	\|E_4(u)\|_{L^2_xL^\infty_{yT}}\lesssim c(1,T)\left\{T\|u\|_{L^\infty_TH^1}+T^{1/2}\|\partial^2 u\|_{L^\infty_xL^2_{yT}}\right\},
	\end{equation}
	and
	\begin{equation}\label{l5}
	\|\partial_x\mathcal{L}(E_4(u))\|_{L^2_xL^\infty_{yT}}+\|\partial_y\mathcal{L}(E_4(u))\|_{L^2_xL^\infty_{yT}}\lesssim  c(1,T)\left\{T\|u\|_{L^\infty_TH^1}+T^{1/2}\|\partial^2 u\|_{L^\infty_xL^2_{yT}}\right\}.
	\end{equation}
\end{lemma}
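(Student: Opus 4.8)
The plan is to handle all five estimates by the single mechanism already used in Lemmas \ref{l0}--\ref{l3}: push the unitary group outside the Duhamel integral via the group law, apply Minkowski's inequality, and invoke the relevant linear bound---the $H^1$-isometry for \eqref{ll1}, Strichartz's estimate \eqref{s1} for \eqref{ll2}, Kato's smoothing \eqref{s2} for \eqref{ll3}, and the maximal function estimate \eqref{s3} for \eqref{l4} and \eqref{l5} (the latter using $s=1>3/4$, which is what makes the constant $c(1,T)$ appear). This reduces every estimate to controlling $\int_0^T\|\varphi_x u+2\varphi u_x\|_{H^1}\,dt'$, or the analogous integral of a single derivative of $\varphi_x u+2\varphi u_x$ in the cases \eqref{ll2}--\eqref{ll3}. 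For \eqref{l5} I would first note that $\partial_x\ol$ and $\partial_y\ol$ commute with $U(t)$ and are bounded on $H^1$ (as recorded just before the lemma), so they are discarded at the cost of a harmless constant and the same integral is recovered.

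Everything then comes down to estimating in $L^1_TL^2_{xy}$ the finitely many products obtained by distributing at most one derivative onto $\varphi_x u+2\varphi u_x$, i.e. terms of type $(\partial^a\varphi)(\partial^b u)$ with $|b|\le1$ and the two genuinely second-order terms $\varphi u_{xx}$, $\varphi u_{xy}$ coming from $\partial(\varphi u_x)$. The essential point is that $\varphi\notin L^2(\R^2)$, so it cannot be paired as in the earlier lemmas; instead I split according to which norm $\varphi$ carries. When the $u$-factor involves at most one derivative, I place the $\varphi$-factor in $L^\infty_{xT}$ (or in $L^2_xL^\infty_T$ when it multiplies $u$ itself), the $u$-factor in $L^\infty_TL^2_{xy}$, and use H\"older in time, which produces the factor $T$ and hence the contribution $T\|u\|_{L^\infty_TH^1}$. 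For the two second-order terms I instead keep $\varphi$ in $L^2_xL^\infty_T$ and the $u$-derivative in Kato's space $L^\infty_xL^2_{yT}$; Cauchy--Schwarz in time gives the factor $T^{1/2}$ together with the pointwise bound
$$
\|\varphi\,\partial^2 u\|_{L^2_{xyT}}\le \|\varphi\|_{L^2_xL^\infty_T}\,\|\partial^2 u\|_{\kse},
$$
which is exactly the source of the term $T^{1/2}\|\partial^2 u\|_{\kse}$ on the right-hand sides. Summing the two families of contributions reproduces \eqref{ll1}--\eqref{l5}.

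The step I expect to be the main obstacle is precisely this bookkeeping of the $\varphi$-factors: since $\varphi$ has no integrability in $y$, each product must be split so that $\varphi$ (or $\varphi_x$, $\varphi_{xx}$) is measured either in $L^\infty_{xT}$ or in $L^2_xL^\infty_T$, never in $L^2(\R^2)$, and one has to check in each case that the complementary $u$-factor lands in a space already controlled by $\|u\|_{L^\infty_TH^1}$ or $\|\partial^2 u\|_{\kse}$. The one slightly delicate product is $\varphi_{xx}u$: using that $\varphi_{xx}\in L^2_xL^\infty_T$ together with the one-dimensional Sobolev bound $\|u\|_{L^\infty_xL^2_y}\lesssim\|u\|_{H^1}$, it is absorbed into the $T\|u\|_{L^\infty_TH^1}$ term. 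All remaining manipulations---the Minkowski reductions, the passage of $U(t)$ through the Duhamel integral, and the treatment of the variable upper limit in the maximal-function estimates---are identical to those already carried out in Lemmas \ref{l0} and \ref{l3}, so I would simply refer to them and omit the routine details.
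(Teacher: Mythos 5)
Your proposal is correct and follows essentially the same route as the paper: reduce each norm via the group law, Minkowski's inequality and the appropriate linear estimate (unitarity, \eqref{s1}, \eqref{s2}, \eqref{s3}, and the $H^1$-boundedness of $\partial_x\ol$, $\partial_y\ol$) to $L^1_TL^2_{xy}$ bounds on the products, then split so that $\varphi$, $\varphi_x$, $\varphi_{xx}$ are measured in $L^\infty_{xT}$ for the lower-order terms (giving $T\|u\|_{L^\infty_TH^1}$) and $\varphi$ in $L^2_xL^\infty_T$ against $\partial^2u$ in $\kse$ for the second-order terms (giving $T^{1/2}\|\partial^2u\|_{\kse}$). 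The only deviation is cosmetic: for $\varphi_{xx}u$ the paper simply uses $\|\varphi_{xx}\|_{L^\infty_{xT}}\|u\|_{L^2_{xy}}$ rather than your $L^2_xL^\infty_T$-plus-Sobolev variant, and both work.
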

\begin{proof}
	For \eqref{ll1}, we have
	\begin{equation}  \label{2.4}
	\|E_4(u)(t) \|_{L^2_{xy}}  \lesssim  
		\int_0^T \|\varphi_xu\|_{L^2_{xy}} dt'+\int_0^T \|\varphi u_x\|_{L^2_{xy}} dt'   
	 =:   {\displaystyle I_1+I_2. }
	\end{equation}
	Now, from H\"older's inequality,
	\begin{equation}\label{I_2}  
	\begin{split}
	I_1 & = {\displaystyle  \int_0^T \|
		\|\varphi_x u\|_{L^2_{x}}\|_{L^2_{y}} dt' \leq \int_0^T \|
		\|\varphi_x\|_{L^\infty_x} \|u\|_{L^2_{x}}\|_{L^2_{y}} dt'  }
	\\
	& \leq   {\displaystyle  \int_0^T
		\|\varphi_x\|_{L^\infty_x} \|u\|_{L^2_{xy}} dt' \leq
		\|\varphi_x\|_{L^\infty_{xT}} \|u\|_{L^\infty_TH^1_{xy}}\,T \lesssim
		T \|u\|_{L^\infty_TH^1_{xy}},  }
	\end{split}
	\end{equation}
	and similarly,
	\begin{equation}\label{I_3}
	I_2 \leq \|\varphi\|_{L^\infty_{xT}} \|u\|_{L^\infty_TH^1_{xy}}\,T \lesssim
	 T \|u\|_{L^\infty_TH^1_{xy}}.
	\end{equation}
Note that the implicit constant in the estimates for $I_1$ and $I_2$ is independent of $T$ (because $\varphi(x-ct)$
	and $\varphi_x(x-ct)$ are uniformly bounded in $x,t$). Thus, from
	\eqref{2.4}--\eqref{I_3},
	\begin{equation}\label{PhiL2}
	\|E_4(u)(t) \|_{L^2_{xy}} \lesssim T \|u\|_{L^\infty_TH^1_{xy}}.
	\end{equation}
Also,
	\begin{equation*}
		\begin{split}
			\|\partial_xE_4(u)(t) \|_{L^2_{xy}} & \lesssim  {\displaystyle
				\int_0^T \|\partial_x(\varphi_xu+\varphi u_x)\|_{L^2_{xy}} dt'}\\
			&\lesssim \int_0^T \|\varphi_{xx}u+2\varphi_x u_x\|_{L^2_{xy}}+ \int_0^T\|\varphi
			u_{xx}\|_{L^2_{xy}}=:J.
		\end{split}
	\end{equation*}
	The first integral above can be estimated as in \eqref{I_2}. Thus,
	\begin{equation}   \label{J_2}
	\begin{split}
	J & \lesssim  {\displaystyle  T \|u\|_{L^\infty_TH^1_{xy}}+ T^{1/2}
		\|\varphi u_{xx}\|_{L^2_{xyT}} }  \\
	& \lesssim   {\displaystyle  T \|u\|_{L^\infty_TH^1_{xy}}+ T^{1/2} \|\|\|\varphi\|_{L^\infty_T}
		\|u_{xx}\|_{L^2_T}\|_{L^2_y}\|_{L^2_x} }  \\
	& \lesssim   {\displaystyle  T \|u\|_{L^\infty_TH^1_{xy}}+ T^{1/2}
		\|\|\varphi\|_{L^\infty_T}
		\|u_{xx}\|_{L^2_{yT}}\|_{L^2_x} }     \\
	& \lesssim   {\displaystyle  T \|u\|_{L^\infty_TH^1_{xy}}+ T^{1/2}
		\|\varphi\|_{L^2_xL^\infty_T}
		\|u_{xx}\|_{L^\infty_x L^2_{yT}} }   \\	
	& \lesssim   {\displaystyle  T \|u\|_{L^\infty_TH^1_{xy}}+ T^{1/2}
		\|\partial^2u\|_{L^\infty_x L^2_{yT}}. }
	\end{split}
	\end{equation}
	In the last inequality we have used that $\varphi(x-ct)\in L^2_xL^\infty_T$.
		Hence, from  \eqref{J_2}, we obtain
	\begin{equation}\label{ptxPhiL2}
	\|\partial_xE_4(u)(t) \|_{L^2_{xy}} \lesssim T\|u\|_{L^\infty_TH^1}+T^{1/2}\|\partial^2 u\|_{L^\infty_xL^2_{yT}}.
	\end{equation}
	Similarly, we get 
	\begin{equation}\label{ptyPhiL2}
	\|\partial_yE_4(u)(t) \|_{L^2_{xy}} \lesssim T\|u\|_{L^\infty_TH^1}+T^{1/2}\|\partial^2 u\|_{L^\infty_xL^2_{yT}}.
	\end{equation}
	Therefore, from \eqref{PhiL2}, \eqref{ptxPhiL2} and
	\eqref{ptyPhiL2}, we deduce the bound \eqref{ll1}.

	For \eqref{ll2}, we have
	\begin{equation}  \label{118}
	\begin{split}
	\|\nabla E_4(u)\|_{L^3_TL^\infty_{xy}} & \lesssim {\displaystyle \int_0^T\|\partial_x(\varphi_xu+\varphi u_x)\|_{L^2_{xy}}+ \int_0^T\|\partial_y(\varphi_xu+\varphi u_x)\|_{L^2_{xy}}}.
	\end{split}
	\end{equation}
	The first integral is estimated as that for $J$. Hence,
	\begin{equation*}
		\begin{split}
			\|\nabla E_4(u)\|_{L^3_TL^\infty_{xy}} & \lesssim  {\displaystyle T\|u\|_{L^\infty_TH^1}+T^{1/2}\|\partial^2 u\|_{L^\infty_xL^2_{yT}}+ \int_0^T\|\varphi_xu_y+\varphi u_{xy}\|_{L^2_{xy}}}\\
			& \lesssim  T\|u\|_{L^\infty_TH^1}+T^{1/2}\|\partial^2 u\|_{L^\infty_xL^2_{yT}}\\
			& \qquad+ T\|\varphi_x\|_{L^\infty_{xT}}\|u_y\|_{L^\infty_TL^2_{xy}}+T^{1/2}\|\varphi\|_{L^2_xL^\infty_T}\|u_{xy}\|_{L^\infty_xL^2_{yT}}\\
			&\lesssim  T\|u\|_{L^\infty_TH^1}+T^{1/2}\|\partial^2 u\|_{L^\infty_xL^2_{yT}}.
		\end{split}
	\end{equation*}
	As in \eqref{118}, we estimate
	\begin{equation*}
		\begin{split}
			\|\partial^2E_4(u)\|_{L^\infty_xL^2_{yT}}& \lesssim   {\displaystyle \int_0^T\|\partial_x(\varphi_xu+\varphi u_x)\|_{L^2_{xy}}+ \int_0^T\|\partial_y(\varphi_xu+\varphi u_x)\|_{L^2_{xy}}} \\
			&\lesssim T\|u\|_{L^\infty_TH^1}+T^{1/2}\|\partial^2 u\|_{L^\infty_xL^2_{yT}}.
		\end{split}
	\end{equation*}
	This proves \eqref{ll3}. Now, for \eqref{l4}, observe that
	\begin{equation*}
		\begin{split}
			\|E_4(u)\|_{L^2_xL^\infty_{yT}}& \leq  c(1,T)\int_0^T\|\varphi_xu+2\varphi u_x\|_{H^1}dt'.
		\end{split}
	\end{equation*}
	So, \eqref{l4} follows by a similar analysis as before.
	
	Finally, since $\partial_x\mathcal{L}$ and $\partial_y\mathcal{L}$ are bounded operators in $H^1(\mathbb{R}^2)$,
	\begin{equation*}
		\begin{split}
			\|\partial_x\mathcal{L}(E_4(u))\|_{L^2_xL^\infty_{yT}}&+\|\partial_y\mathcal{L}(E_4(u))\|_{L^2_xL^\infty_{yT}}\\
			&\leq c(1,T)\|\partial_x\mathcal{L}\int_0^tU(-t')(\varphi_xu+2\varphi u_x)dt'\|_{H^1} \\ &\qquad+c(1,T)\|\partial_y\mathcal{L}\int_0^tU(-t')(\varphi_xu+2\varphi u_x)dt'\|_{H^1}\\
			&\leq c(1,T)\int_0^T\|\varphi_xu+2\varphi u_x\|_{H^1}dt'.
		\end{split}
	\end{equation*}
	Thus, \eqref{l5} also follows from arguments already used. This completes the proof of the lemma.
\end{proof}

\begin{lemma} \label{lemmaE5}
	Let $E_5$ be defined as in \eqref{e5}. Then,
	\begin{equation}\label{l6}
	\|E_5(u)\|_{L^\infty_TH^1}\lesssim T\|u\|_{L^\infty_TH^1},
	\end{equation}
	\begin{equation}\label{l7}
	\|\nabla E_5(u)\|_{L^3_TL^\infty_{xy}}\lesssim T\|u\|_{L^\infty_TH^1},
	\end{equation}
	\begin{equation}\label{l8}
	\|\partial^2E_5(u)\|_{L^\infty_xL^2_{yT}}\lesssim T\|u\|_{L^\infty_TH^1},
	\end{equation}
	\begin{equation}\label{l9}
	\|E_5(u)\|_{L^2_xL^\infty_{yT}}\lesssim c(1,T)T\|u\|_{L^\infty_TH^1},
	\end{equation}
	and
	\begin{equation}\label{l10}
	\|\partial_x\mathcal{L}(E_5(u))\|_{L^2_xL^\infty_{yT}}+\|\partial_y\mathcal{L}(E_5(u))\|_{L^2_xL^\infty_{yT}}\lesssim c(1,T)T\|u\|_{L^\infty_TH^1}.
	\end{equation}
\end{lemma}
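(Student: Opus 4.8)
The plan is to exploit the fact that, in contrast with the terms $E_1,\dots,E_4$, the nonlinearity appearing in $E_5$ loses \emph{no} derivative, which is precisely why every bound in the statement is of the form $\lesssim T\|u\|_{L^\infty_TH^1}$ with no $T^{1/2}\|\p^2u\|_{\kse}$ correction. The starting observation is that $\p_x\mathcal{L}=\p_x^2\Delta^{-1}$ is the Fourier multiplier $m_1$ of \eqref{m1m2}, hence bounded on $H^s(\R^2)$ for every $s\geq0$; in particular $\|\p_x\mathcal{L}(u)\|_{H^1}\lesssim\|u\|_{H^1}$. On the other hand, multiplication by $\varphi_x=\varphi_x(x-\omega t)$ is bounded on $H^1(\R^2)$ uniformly in $t$: since $\varphi_x$ does not depend on $y$, a Leibniz expansion gives $\|\varphi_x w\|_{H^1}\lesssim(\|\varphi_x\|_{L^\infty_{xT}}+\|\varphi_{xx}\|_{L^\infty_{xT}})\|w\|_{H^1}$, both norms being finite for the soliton. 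Combining these two facts yields the single master estimate
\begin{equation*}
\|\varphi_x\p_x\mathcal{L}(u)(t)\|_{H^1}\lesssim\|u(t)\|_{H^1}\leq\|u\|_{L^\infty_TH^1},
\end{equation*}
which is the only genuinely new ingredient; the rest is a routine pairing of this bound with the linear estimates of Lemma \ref{lines}.

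With the master estimate in hand, I would prove the five inequalities exactly in the spirit of Lemmas \ref{l0}--\ref{lemmaE4}. For \eqref{l6}, Minkowski's inequality together with the unitarity of $U(t)$ on $H^1$ gives $\|E_5(u)\|_{L^\infty_TH^1}\leq\int_0^T\|\varphi_x\p_x\mathcal{L}(u)\|_{H^1}\,dt'$, and an application of the master estimate followed by H\"older in time produces the factor $T$. For \eqref{l7} I would commute $\nabla$ past $U(t-t')$ and use Strichartz's estimate \eqref{s1} on $\nabla(\varphi_x\p_x\mathcal{L}(u))$, again bounded in $L^2$ by the master $H^1$ bound. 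Estimate \eqref{l8} is the same computation but writing $\p^2U(t-t')g=\nabla U(t-t')(\p g)$ with $g:=\varphi_x\p_x\mathcal{L}(u)$ and invoking Kato's smoothing \eqref{s2}, so that one derivative is gained by the group and the remaining one is absorbed into $\|g\|_{H^1}$. Finally, \eqref{l9} and \eqref{l10} follow from the maximal function estimate \eqref{s3}: writing $E_5(u)(t)=U(t)\int_0^tU(-t')g\,dt'$, bounding the inner integral in $H^1$ uniformly over $t\in[0,T]$, and using that $\p_x\mathcal{L}$ and $\p_y\mathcal{L}$ are bounded on $H^1$ to absorb the extra multipliers in \eqref{l10}; each time the time integration contributes the factor $T$ and the maximal function contributes the constant $c(1,T)$.

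There is no serious obstacle here, and the argument is deliberately softer than those of the previous lemmas. The only point deserving care is the verification of the master estimate, namely the boundedness of multiplication by $\varphi_x$ on $H^1(\R^2)$---this is where the boundedness of $\varphi_x$ and $\varphi_{xx}$, together with the $y$-independence of $\varphi_x$ (which kills the $\p_y$ term), is used---combined with the boundedness of $\p_x\mathcal{L}$ on $H^s$, $s\geq0$. Once this is secured, the five bounds are obtained by selecting, for each target norm, the corresponding linear estimate from Lemma \ref{lines} and integrating the $H^1$ bound crudely in time; because no derivative is lost to the operator $\p_x\mathcal{L}$, no smoothing-in-time exponent is needed and the clean factor $T$ appears throughout. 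I would therefore write out only \eqref{l6} and \eqref{l9} in detail and indicate that \eqref{l7}, \eqref{l8} and \eqref{l10} follow by the same reasoning.
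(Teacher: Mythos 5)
Your proposal is correct and follows essentially the same route as the paper: the key point in both is that $\p_x\ol$ is a bounded Fourier multiplier commuting with derivatives, combined with the uniform boundedness of $\varphi_x$ and $\varphi_{xx}$ and the $y$-independence of $\varphi$, which yields $\|\varphi_x\p_x\ol(u)(t)\|_{H^1}\lesssim\|u(t)\|_{H^1}$ and hence the clean factor $T$ with no loss of derivatives. The paper likewise writes out only the $L^\infty_TH^1$ bound in detail and dispatches the remaining four by pairing this bound with the linear estimates of Lemma \ref{lines}, exactly as you indicate.
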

\begin{proof}
	We only estimate \eqref{l6}. The other estimates are similar to those in Lemma \ref{lemmaE4}. Since $\partial_x\mathcal{L}$ is a bounded operator on $L^2(\mathbb{R}^2)$, we have
	\begin{equation*}
		\begin{split}
			\|E_5(u)(t)\|_{L^2_{xy}}&\leq \int_0^T\|\varphi_x\partial_x\mathcal{L}(u)\|_{L^2_{xy}}dt'\\
			&\leq \int_0^T\|\varphi_x\|_{L^\infty_x}\|\partial_x\mathcal{L}(u)\|_{L^2_{xy}}dt'\\
			&\leq \int_0^T\|\varphi_x\|_{L^\infty_x}\|u\|_{L^2_{xy}}dt'\lesssim T\|u\|_{L^\infty_TH^1}.
		\end{split}
	\end{equation*}
	Now, since $\partial_x$ and $\partial_y$ commutes with $\partial_x\mathcal{L}$ and using that $\partial_x\mathcal{L}$ is a bounded operator on $L^2(\mathbb{R}^2)$ once again, we deduce
	\begin{equation*}
		\begin{split}
			\|\partial_xE_5(u)(t)\|_{L^2_{xy}}&\leq \int_0^T\|\partial_x(\varphi_x\partial_x\mathcal{L}(u))\|_{L^2_{xy}}dt'\\
			&\leq \int_0^T\|\varphi_{xx}\partial_x\mathcal{L}(u)+\varphi_{x}\partial_x\mathcal{L}(\partial_xu)\|_{L^2_{xy}}dt'\\
			&\leq T(\|\varphi_{xx}\|_{L^\infty_{xT}}\|u\|_{L^\infty_TL^2_{xy}}+\|\varphi_{x}\|_{L^\infty_{xT}}\|\partial_xu\|_{L^\infty_T L^2_{xy}})\\
			&\lesssim T\|u\|_{L^\infty_TH^1}.
		\end{split}
	\end{equation*}
	and
	\begin{equation*}
		\begin{split}
			\|\partial_yE_5(u)(t)\|_{L^2_{xy}}&\leq \int_0^T\|\varphi_x\partial_x\mathcal{L}(\partial)yu)\|_{L^2_{xy}}dt'\\
			&\leq T\|\varphi_{x}\|_{L^\infty_{xT}}\|\partial_yu\|_{L^\infty_T L^2_{xy}}\\
			&\lesssim T\|u\|_{L^\infty_TH^1}.
		\end{split}
	\end{equation*}
	Combining the above estimates we obtain \eqref{l6}. The proof of the lemma is thus completed.
\end{proof}

With the above estimates in hand, we are able to prove Theorem \ref{lwp1}.

\begin{proof}[Proof of Theorem \ref{lwp1}]
As we  already mentioned, we use the fixed point theorem. Let  us define the  operator
\begin{equation*}
\Phi(u)(t)=U(t)u_0-\dfrac{1}{2}\sum_{j=1}^5E_j(u)(t),
\end{equation*}
and          the closed   ball
\begin{equation*} 
X_T^a:=\{ v\in C([0,T]:H^1(\R^2)):\tres v \tres \le a\},
\end{equation*}
where
\begin{equation*}
\begin{split}
\tres  v \tres&:=\|v\|_{L^{\infty}_TH^1}+\|\p^2v\|_{\kse}
+\|\nabla v\|_{\str}\\
&\quad+\|v\|_{\mne}+\|\p_x \ol(v)\|_{\mne}+\|\p_y\ol(v)\|_{\mne}.
\end{split}
\end{equation*}

By using Lemmas \ref{l0}--\ref{lemmaE5}, we are able to show that there  exist positive constants $a$ and $T$ such that $\Phi:X_T^a\mapsto X_T^a$ is well defined and is a contraction. From this point on, the arguments to  complete  the  proof of the theorem are standard. So we will not give the details.
\end{proof}

\begin{remark}\label{remwelp}
By using the same strategy as in the proof of Theorem \ref{lwp1} we may show that \eqref{uzkp} is locally well-posed in the anisotropic Sobolev space $H^{s,1}(\R^2)$, $s>3/4$, which is defined through the norm
$$
\|v\|_{H^{s,1}}^2=\|v\|^2_{L^2_{xy}}+\|D_x^sv\|^2_{L^2_{xy}}+\|\p_yv\|^2_{L^2_{xy}}.
$$
Here $D_x^{s}$,  is defined through its Fourier transform as $\F_{xy}({D_x^{s} u_0})(\xi,\mu)=|\xi|^{s}\F_{xy}(u_0)(\xi,\mu)$.
	
To see this, it suffices to show that the integral equation \eqref{ie} has a unique fixed point in the ball
$$
X_T^a:=\{ v\in C([0,T]:H^{s,1}(\R^2)):\tres v \tres \le a\}
$$
where now
\[
\begin{split}
\tres  v \tres&:=\|v\|_{L^{\infty}_TH^{s,1}}+\|\nabla D^s_x v\|_{\kse}+\|\nabla \p_y v\|_{\kse}
+\|\nabla v\|_{L^2_TL^\infty_{xy}}\\
&\quad+\|v\|_{\mne}+\|\p_x \ol(v)\|_{\mne}+\|\p_y\ol(v)\|_{\mne}.
\end{split}
\]
The nonlinear estimates are similar to those ones for the terms $E_1$, $E_2$ and $E_3$ above,  except the ones including the operator $D^s_x$, where we need to use the  one-dimensional fractional Leibniz rule (see \cite{kpv1}):
\begin{equation}\label{leibrule}
\|D_x^s(fg)-fD_x^sg-gD_x^sf\|_{L^2_x}\leq c \|g\|_{L^\infty_x}\|D^sf\|_{L^2_x}, \qquad s\in(0,1),
\end{equation}
and the estimate 
\begin{equation}\label{str1}
\|U(t)u_0\|_{L^2_TL^\infty_{xy}}\leq c \|D_x^{-\varepsilon/2} u_0\|_{L^2_{xy}},
\end{equation}
which holds for any $0\leq \varepsilon<1/2$.  Estimate \eqref{str1} was established in \cite{LP}.

For the sake of clearness we estimate $E_1$ in the norms $\|D_x^s(E_1(u))\|_{L^2_{xy}}$ and $\|\p_xE_1(u)\|_{L^2_TL^\infty_{xy}}$, assuming $s\in(3/4,1)$. Indeed, from group properties, Minkowski and H\"older's inequalities, we obtain
\begin{equation}\label{E1lei}
\|D_x^s(E_1(u))\|_{L^2_{xy}}\lesssim \int_0^T\|D_x^s(uu_x)\|_{L^2_{xy}}dt\lesssim T^{1/2}\|D_x^s(uu_x)\|_{L^2_{xyT}}dt.
\end{equation}
Now, we use \eqref{leibrule} to obtain
\[
\begin{split}
\|D_x^s(uu_x)\|_{L^2_{xyT}}&\leq \| \|D_x^s(uu_x)-uD_x^su_x-u_xD_x^su\|_{L^2_x} \|_{L^2_{yT}}+\|uD_x^su_x\|_{L^2_{xyT}}+\|u_xD_x^su)\|_{L^2_{xyT}}\\
&\lesssim  \|u_x\|_{L^2_TL^\infty_{xy}}\|D^s_xu\|_{L^{\infty}_TL^{2}_{xy}}+c\|u\|_{\mne}\| D^s_x u_x\|_{\kse}.
\end{split}
\]
The last  two  inequalities combine to give
$$
\|D_x^s(E_1(u))\|_{L^2_{xy}}\lesssim T^{1/2}\tres u\tres^2.
$$
Also, in view of \eqref{str1}, we deduce
\begin{equation}\label{recder}
\|\p_x(E_1(u))\|_{L^2_TL^\infty_{xy}}\lesssim  \int_0^T\|D_x^{-\varepsilon/2}\p_x(uu_x))\|_{L^2_{xy}}dt.
\end{equation}
Since $s>3/4$, we can choose $\varepsilon$ sufficiently close to $1/2$ in such a way that $1-\varepsilon/2\leq s$. As a consequence,
$$
\|\p_x(E_1(u))\|_{L^2_TL^\infty_{xy}}\lesssim  \int_0^T\|uu_x\|_{H^{s,1}}dt.
$$
From this point on, one uses the same arguments as in \eqref{E1lei} and in the proof of Theorem \ref{lwp1}.
\end{remark}

\section*{Acknowledgement}

FL was partially supported by FAPERJ-Brazil and CNPq-Brazil. AP is partially supported by FAPESP-Brazil and CNPq-Brazil.

\end{document}